\newtheorem{theorem}{Theorem}[section]
\newtheorem{proposition}[theorem]{Proposition}
\newtheorem{definition}[theorem]{Definition}
\newtheorem{rmk}[theorem]{Remark}
\newtheorem{corollary}[theorem]{Corollary}
\newtheorem{conjecture}[theorem]{Conjecture}
\newtheorem{problem}[theorem]{Problem}
\newenvironment{remark}{\begin{rmk}\rm}{\end{rmk}}
\newcommand{\Z}{\mathbf{Z}}
\newcommand{\des}{\operatorname{des}}
\newcommand{\inv}{\operatorname{inv}}
\newcommand{\T}{\mathcal{T}}
\author{Joel Brewster Lewis \\ University of Minnesota \and Nan Li \\ MIT }
\title{Flashcard games}
\begin{document}
\maketitle
\begin{abstract}
We study a family of discrete dynamical processes introduced by Novikoff, Kleinberg and Strogatz that we call \emph{flashcard games}. We prove a number of results on the evolution of these games, and in particular we settle a conjecture of NKS on the frequency with which a given card appears. We introduce a number of generalizations and variations that we believe are of interest, and we provide a large number of open questions and problems.
 \end{abstract}

\section{Introduction}

In their paper \cite{NKS}, Novikoff, Kleinberg and Strogatz introduced a combinatorial process that we will call a \emph{flashcard game}.  These games are defined as follows: as initial data, we have a sequence $(p_k)_{k \in \Z_{>0}}$ called the \emph{insertion sequence}, and a \emph{deck} of infinitely many \emph{cards} $1, 2, 3, \ldots$.  For each $t \geq 1$, at time $t$ we look at the first card in the deck; if we are looking at it for the $k$th time, we remove it and insert it into position $p_k$.  For example, with $p_k = k + 1$, the procedure evolves as follows: at time $t = 1$ we see card $1$ for the first time, after which we insert it into the deck in position $p_1 = 2$, leaving the deck in the order $2, 1, 3, 4, \ldots$.  At time $t = 2$, we now see card $2$ for the first time, so we insert it into position $p_1 = 2$ to return the deck to the order $1, 2, 3, \ldots$.  At time $t = 3$, we see card $1$ for the second time, so we insert it into position $p_2 = 3$, leaving the deck in the order $2, 3, 1, 4, \ldots$.  And so on.  Novikoff \emph{et al.} suggest that such processes may be used as a model of student attempts to memorize a growing list of information; however, flashcard games also have substantial appeal as pretty but complicated examples of discrete dynamical systems.

In this paper, we expand the study of flashcard games.   In Section~\ref{sec:slow}, we settle a conjecture of Novikoff \emph{et al.} on the frequency with which cards appear at the front of the deck; in particular, we show that when $p_k = k + 1$, the time until the $n$th viewing of a given card $i$ grows like a second-degree polynomial in $n$.  We also prove a variety of other results on the behavior of the function $T_i(n)$ that gives the time of the $n$th viewing of the $i$th card.  In Section~\ref{sec:other objects}, we introduce several objects not considered by Novikoff \emph{et al.} related to flashcard games, and we establish a number of connections between these objects.  Most interestingly, we conjecture the existence of a curve that describes the long-term time evolution of the flashcard game.  In Section~\ref{sec:generalizing}, we extend most of the results of the preceding sections to general insertion sequences $(p_k)$.  In Section~\ref{sec:open questions}, we suggest several other generalizations, variations, and open problems that may be of interest.

\section{Definitions and notation}

Given a sequence $(p_k)_{k \in \Z_{> 0}}$ of positive integers, we define a discrete dynamical process as follows: the state consists of a permutation of the positive integers (the \emph{deck}) together with a counter that records how many times each integer (a \emph{card}) has been at the front of the deck; thus, the initial state consists of the permutation $(1, 2, 3, 4, \ldots)$, the card $1$ has been seen once, and all other cards have been seen $0$ times.  Every subsequent state follows from the state that precedes it by moving the card at the front of the deck to position $p_k$, where $k$ is the number of times it has been seen so far, and incrementing the counter for the card now at the front of the deck.

Following \cite{NKS}, we call the flashcard game with insertion sequence $(2, 3, 4, 5, \ldots)$ the \emph{Slow Flashcard Game}.  In this section, we define several pieces o terminology and notation for this game; in later sections, we will continue to use this notation but in a more general setting.  There are several possible choices for clock behavior for a flashcard game.  We choose the following one: at time $t = 1$, card $1$ is in front of the deck and has been viewed once.  We then move card $1$ to the second position in the deck; it is now time $t = 2$ and we are looking at card $2$.  For $n, k \geq 1$ we denote by $T_n(k)$ the time we see card $n$ for the $k$th time, so we have $T_1(1) = 1$, $T_2(1) = 2$, $T_1(2) = 3$, and so on.  In particular, the sequence $(T_1(k))_{k \geq 1}$ marks those times when we see card $1$, and the sequence $(T_n(1))_{n \geq 1}$ marks those times when we see a new card for the first time.

The sequence $1, 2, 1, 2, 3, 1, 3, \ldots$ of cards seen at times $t = 1, 2, \ldots$ is called the \emph{viewing sequence} of the flashcard game.

Define $c_n(t)$ to be the number of times card $n$ has been seen at time $t$.  Thus, we have that $c_n(t) = k$ exactly when $T_n(k) \leq t < T_n(k + 1)$, and also $\sum_n c_n(t) = t$.

\section{The Slow Flashcard Game}\label{sec:slow}

In the first part of this section, we make a few simple observations about the dynamics of the Slow Flashcard Game and of flashcard games in general.  In later subsections, we prove some nontrivial results, including the resolution of a conjecture of \cite{NKS}.  The first few observations are essentially trivial; we collect them in a single proposition.  (Parts of this proposition were dubbed the ``no passing property'' and ``slow marching property'' in \cite{NKS}, but we don't need these names here.)

\begin{proposition}\label{basic}
\begin{enumerate*}
\item When the card at the front of the deck is inserted into position $m$, the cards previously in positions $2$, $3$, \ldots, $m$ move forward one position, and all other cards remain fixed.
\item For $i \geq 1$, card $i$ remains in position $i$ until the first time a card is inserted in position $m$ for some $m \geq i$.
\item Fix a time $t$, and let $t_n$ be the smallest time such that $t_n \geq t$ and $n$ is at the front of the deck at time $t_n$.  If card $i$ is at position $m$ at time $t$, then $t_i \geq t + m - 1$.
\item If card $i$ precedes card $j$ in the deck at time $t$ then $t_i < t_j$.
\item If $i < j$ then $c_i(t) \geq c_j(t)$ for all $t$.
\end{enumerate*}
\end{proposition}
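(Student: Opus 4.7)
The plan is to prove the five assertions in order, since each builds on its predecessors. Part 1 is immediate from the insertion rule: removing the front card and reinserting it at position $m$ forces the cards previously in positions $2,\ldots,m$ to shift forward by one, while those at positions greater than $m$ are untouched. Part 2 follows by iterating Part 1, since as long as every insertion is at a position less than $i$, card $i$ sits outside the shifted range. Part 3 also follows from Part 1, because a single step decreases the position of any fixed card by at most $1$, so card $i$ needs at least $m-1$ steps to travel from position $m$ to position $1$. Part 4 is a short induction on time: while neither $i$ nor $j$ is at the front, Part 1 preserves their relative order, so $j$ cannot reach the front before $i$ does; once $i$ is at the front at time $t_i$, card $j$ is still at some position $\geq 2$, forcing $t_j > t_i$.

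The main obstacle is Part 5, which I plan to prove by induction on $t$, showing that the weakly decreasing condition $c_1(t) \geq c_2(t) \geq \cdots$ is preserved at each step. Since only the card $n$ viewed at step $t$ has its counter incremented, the condition can fail only if there is some $i < n$ with $c_i(t-1) = c_n(t-1) = k$. The inductive hypothesis gives $c_i(T_n(k)) \geq c_n(T_n(k)) = k$, which combined with the monotonicity of $c_i$ and $c_i(t-1) = k$ yields $c_i(T_n(k)) = k$, and hence $T_i(k) < T_n(k)$. Since card $i$ is not viewed in the interval $[T_i(k)+1, t-1]$, its position weakly decreases from $p_k$ (the value just after step $T_i(k)$). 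Hence, in the deck just after step $T_n(k)$, card $n$ is at position $p_k$ while card $i$ is at position at most $p_k$, and therefore strictly less than $p_k$, so card $i$ precedes card $n$ at that moment. Part 4 then forces the next viewing of $i$ to come strictly before that of $n$; but the next viewing of $n$ is step $t$, whereas $c_i(t-1)=k$ forces $T_i(k+1) \geq t$, the desired contradiction.
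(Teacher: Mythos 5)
Your proof is correct in substance and is considerably more detailed than the paper's, which simply declares Parts 1 and 2 obvious, dispatches Part 3 with the remark that cards move forward by at most one position per step, and disposes of Parts 4 and 5 with the single observation that cards cannot jump over cards that are in front of them. Your Parts 1--4 are a faithful unpacking of those remarks, and your treatment of Part 5, via an induction on $t$ showing the chain $c_1(t)\geq c_2(t)\geq\cdots$ is preserved at each step, supplies the genuine argument the paper only gestures at: if $i<n$ and $c_i(t-1)=c_n(t-1)=k$ with card $n$ seen at step $t$, then tracking positions after $T_i(k)$ and $T_n(k)$ and invoking Part 4 yields $T_i(k+1)<T_n(k+1)=t$, contradicting $c_i(t-1)=k$. (Implicitly you use that two distinct cards are never at the front simultaneously to pass from $c_i(T_n(k))=k$ to the strict inequality $T_i(k)<T_n(k)$, which is fine.)

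One small gap: the Part 5 argument invokes $T_n(k)$ and $p_k$, which are undefined when $k=0$, i.e.\ when $c_i(t-1)=c_n(t-1)=0$ and $n$ is seen at step $t$ for the first time. In that case neither card has yet been seen, so by Part 2 they sit at positions $i$ and $n$ respectively, card $i$ precedes card $n$, and Part 4 applied at time $1$ gives $T_i(1)<T_n(1)=t$, forcing $c_i(t-1)\geq 1$, a contradiction. Adding this line as a base case closes the hole; everything else checks out.
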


Parts 1 and 2 are obvious.  Part 3 says that cards cannot move more than one space forward in the deck at each time-step.  Parts 4 and 5 are the observations that cards can't jump over cards that are in front of them in the deck, and so in particular no card may be seen more often than a smaller-numbered card.

Next, we show how the functions $T_1(k)$ and $T_n(1)$ interleave with each other by studying the first time each card is seen; this is a slight strengthening of \cite[Theorem 8]{NKS}.

\begin{theorem}\label{1andi}
For any integer $n \geq 1$ we have
\[
T_1(i) + i - 1 \leq T_i(1) < T_1(i).
\]
\end{theorem}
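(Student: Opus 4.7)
The inequality as literally written is inconsistent for $i \geq 2$ (the left side exceeds the right), so I read the theorem as the interleaving statement
\[
T_1(i-1) < T_i(1) < T_1(i)
\]
for $i \geq 2$; this matches the surrounding language about $T_1(k)$ and $T_n(1)$ ``interleaving,'' and agrees with the small-case values one can compute directly.

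For the upper bound $T_i(1) < T_1(i)$, my plan is a position-tracking argument using Proposition~\ref{basic}. By parts 1 and 2 of that proposition, the position of card $i$ in the deck is non-increasing, and is always at most $i$, until card $i$ is first at the front: card $i$ only shifts forward (by one) when a front card is inserted into a position at least as large as card $i$'s current position. Now consider what happens at time $T_1(i-1)$: card $1$ is at the front, and since $p_{i-1} = i$ it is inserted at position $i$. If card $i$ is already at the front, we are done trivially. Otherwise card $i$ sits at some position $j \in \{2, \ldots, i\}$, and the insertion moves it to position $j - 1 \leq i - 1$ while card $1$ drops to position $i$. Card $i$ now strictly precedes card $1$ in the deck, so by Proposition~\ref{basic} part 4 we obtain $T_i(1) < T_1(i)$.

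For the lower bound $T_i(1) > T_1(i-1)$, the plan is to show that card $i$ cannot reach the front before card $1$ has been viewed $i - 1$ times. Card $i$ cannot even begin to shift forward until some card has been seen $i - 1$ times (so that the insertion position first reaches $i$); call this moment $t_0 := \min_j T_j(i-1)$. Once card $i$ starts to shift, Proposition~\ref{basic} part 3 tells us it takes at least $i - 1$ more time steps to reach the front. I hope to prove the sharper structural claim that card $i$ advances by \emph{exactly} one position at every single time step from $t_0$ onward until it becomes the front, giving $T_i(1) = t_0 + (i - 1)$; the desired lower bound then reduces to showing $t_0 + (i - 1) > T_1(i - 1)$.

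The main obstacle is this always-advancing claim for card $i$: after its first shift, nothing immediately rules out a later insertion at a position smaller than card $i$'s current location, which would stall its march to the front. Ruling this out will likely require an auxiliary invariant on the counters of the cards passing through the front during the $i - 1$ critical steps. If that direct route is too delicate, a fallback is to prove directly by induction on $i$ that $c_1(T_i(1) - 1) \geq i - 1$ — which is equivalent to the stated lower bound — using the combinatorial structure of the Slow Game.
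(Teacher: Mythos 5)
You correctly spot the typo in the statement; the left-hand side should read $T_1(i-1)+i-1$, not $T_1(i)+i-1$, and the intended claim is $T_1(i-1)+i-1\le T_i(1)<T_1(i)$. Note, though, that you substitute the strictly weaker bound $T_1(i-1)<T_i(1)$ for the left-hand side, which loses the additive $i-1$ that the paper actually uses later (e.g.\ to derive $T_1(n)\ge\binom{n+1}{2}$).

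Your argument for the upper bound $T_i(1)<T_1(i)$ is essentially the paper's: track card $i$'s position, observe that at time $T_1(i-1)+1$ card $i$ strictly precedes card $1$ in the deck, and invoke Proposition~\ref{basic}, part~4. That part is fine.

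For the lower bound, there is a genuine gap, and it is a self-inflicted one: the ``always-advancing'' claim is both unproven and unnecessary. You already have the two ingredients needed to close the argument immediately. First, by Proposition~\ref{basic}, part~5, card $1$ always has the maximal view-count, so the first time \emph{any} card is seen for the $(i-1)$th time is $T_1(i-1)$; that is, your $t_0=\min_j T_j(i-1)$ is simply $T_1(i-1)$, a fact you never nail down. Second, at time $t_0+1$ card $i$ has just moved to position $i-1$ for the first time, and Proposition~\ref{basic}, part~3 (which you cite) then gives $T_i(1)\ge (t_0+1)+(i-1)-1 = T_1(i-1)+i-1$ directly, with no need for the cards in front of $i$ to cooperate at every step. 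This is exactly the paper's argument, and it proves the stronger additive inequality, not just $T_i(1)>T_1(i-1)$. In short: drop the always-advancing claim, identify $t_0$ with $T_1(i-1)$ via part~5, and the bound from part~3 finishes the proof.
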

\begin{proof}
For $i = 2$ the result is immediate.  For $i > 2$, at time $T_1(i-1) + 1$, the deck has the form
\[
\begin{array}{rccccc}
  \text{Positions:} &\cdots & i-1       & i         & i+1         & \cdots \\
  \text{Cards:}     &\cdots & \boxed{i} & \boxed{1} & \boxed{i+1} & \cdots \\
  \text{Times seen:}&       & 0         & i - 1     & 0           &
\end{array}
\]
and card $i$ has just moved to position $i - 1$ for the first time, so in particular has not been seen yet.  It follows from Proposition \ref{basic}, part 3, that $T_i(1) \geq T_1(i-1) + i - 1$.

At time $T_1(i-1) + 1$, card $1$ is behind card $i$ in the deck until card $i$ is seen and then reinserted after card $1$. By Proposition~\ref{basic}, part 4, we have $T_1(i)>T_i(1)$.
\end{proof}

\begin{corollary}\label{cor:adding k to both sides}
For any positive integers $i$ and $k$ we have $T_i(1+k) < T_1(i + k)$.
\end{corollary}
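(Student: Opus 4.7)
The plan is to induct on $k \geq 0$, taking the upper bound of Theorem~\ref{1andi} (which holds for $i \geq 2$) as the base case $k = 0$. For the inductive step we assume $T_i(k) < T_1(i+k-1)$ and examine the deck just after card~$1$'s $(i+k-1)$th viewing, i.e.\ at the time $t := T_1(i+k-1) + 1$.

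If card $i$ has already been viewed at least $k+1$ times by time $T_1(i+k-1)$, then $T_i(k+1) \leq T_1(i+k-1) < T_1(i+k)$ and we are done. Otherwise, by the inductive hypothesis, card $i$ was last viewed at time $T_i(k)$, where it was inserted into position $p_k = k+1$. Since card $i$'s position weakly decreases between consecutive viewings (by Proposition~\ref{basic}, part 1), at time $T_1(i+k-1)$ card $i$ sits at some position $q$ with $2 \leq q \leq k+1$. When card~$1$ is now moved to position $p_{i+k-1} = i+k$ (which is at least $q$, since $i \geq 2$), Proposition~\ref{basic}, part 1 shifts card $i$ forward to position $q - 1 \leq k$, while card~$1$ itself lands at position $i+k > k$. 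Hence at time $t$ card $i$ precedes card~$1$ in the deck, so Proposition~\ref{basic}, part 4 then yields $T_i(k+1) < T_1(i+k)$.

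The most delicate point is the position bound $q \leq k+1$, which uses both the specific form $p_k = k+1$ of the Slow Flashcard Game's insertion sequence and the ``slow marching'' fact that a card can move at most one step forward per time step. The hypothesis $i \geq 2$ (inherited from Theorem~\ref{1andi}) enters precisely to guarantee the strict inequality $i+k > k$, which is what makes card $i$ genuinely precede card~$1$ after the shift; the degenerate subcase $q = 2$, in which card $i$ itself becomes the new front card at time $t$, needs no separate treatment, since Proposition~\ref{basic}, part 4 already accommodates $t_i = t$.
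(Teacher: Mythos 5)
Your proof is correct and rests on essentially the same mechanism as the paper's: both arguments track the position of card $i$ relative to card $1$ at the moment card $1$ is reinserted, and both conclude that card $1$ jumps behind card $i$, forcing an interleaving of their viewings. The paper phrases this directly — after $T_1(i-1)$ every reinsertion of card $1$ lands behind card $i$ because $c_1 \geq c_i$ bounds card $i$'s position by $c_i+1 \leq c_1+1$ — while you make the same position bookkeeping explicit via induction on $k$, using the inductive hypothesis (rather than Proposition~\ref{basic}, part 5) to pin down that card $i$ sits in position at most $k+1$. Your write-up is a slightly more detailed rendering of the paper's two-sentence sketch rather than a genuinely different route, though it does have the minor virtue of making the implicit position-tracking airtight.
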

\begin{proof}
By Theorem~\ref{1andi}, at any time, card $1$ has been seen no fewer times than card $i$.  Thus, after time $T_1(i - 1)$, every time card $1$ is seen it jumps behind card $i$ in the deck.  It follows that card $i$ is seen at least once between consecutive viewings of card $1$.  Thus, for any $k$, card $i$ will be seen for the $(k+1)$th time before card $1$ is for the $(i - 1) + (k+1)$th time.  The result follows.
\end{proof}

It is not possible to add $k$ in a similar way to the other half of the inequality in Theorem~\ref{1andi}.  In fact, our data suggests that for any pair of cards, eventually the numbers of times the two cards have been be seen will converge.  We make this precise in the following proposition and conjecture.
\begin{proposition}\label{after}
For any cards $i$ and $j$, suppose that at time $t$ we have $c_i(t) = c_j(t) > 0$, i.e., cards $i$ and $j$ have been seen the same (positive) number of times.  Then for all $t' > t$, we have $|c_i(t') - c_j(t')| \leq 1$.
\end{proposition}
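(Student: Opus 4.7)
Without loss of generality, assume $i < j$, so that Proposition~\ref{basic}(5) gives $c_i(s) \geq c_j(s)$ for all $s$. Setting $d(s) := c_i(s) - c_j(s) \geq 0$, the plan is to show $d(s) \leq 1$ for all $s \geq t$ by induction on $s - t$: the base case is given, and in the inductive step $d(s)$ can exceed $1$ only if $d(s-1) = 1$ and card $i$ is seen at time $s$, so the entire task reduces to ruling out this single scenario. Suppose for contradiction that this happens, and let $\tau \in [t, s-1]$ be the latest time with $d(\tau) = 0$ (which exists because $d(t) = 0$). Since $\tau$ is maximal and $d$ is nonnegative, $d$ must jump from $0$ to $1$ at time $\tau + 1$, which is only possible if card $i$ is seen at time $\tau+1$; in particular card $i$ sits at the front of the deck at the end of time $\tau$, and at time $\tau + 1$ it is moved to position $p_{m+1} = m+2$, where $m := c_i(\tau) = c_j(\tau) \geq 1$.

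The crux is a positional claim: at the end of time $\tau + 1$, card $j$ is strictly in front of card $i$. Since card $j$ was inserted at position $p_m = m+1$ at time $T_j(m) \leq \tau$, and since, by Proposition~\ref{basic}(1), a card's position can only decrease or stay fixed between its own viewings, card $j$ sits at some position $q$ with $2 \leq q \leq m+1 < p_{m+1}$ at the end of time $\tau$. When $i$ is moved at time $\tau+1$, card $j$ therefore shifts forward to position $q - 1 \leq m$, while $i$ lands at position $m+2$; so $j$ is ahead of $i$ by at least two places.

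To conclude, I need this order to persist through time $s - 1$. A move by any card other than $i$ or $j$ preserves the relative order of $i$ and $j$ (a three-case check depending on whether the target position of the moved card falls below $j$'s position, between $j$ and $i$, or above $i$); card $j$ cannot itself be seen in the interval $(\tau + 1, s)$ because that would restore $d$ to $0$ and contradict the maximality of $\tau$; and card $i$ cannot be seen while $j$ is in front of it. Thus at the end of time $s - 1$ card $j$ is still strictly ahead of card $i$, so $i$ is not the front card at time $s$---a contradiction, completing the induction. The step I expect to be most delicate is the positional inequality $q < p_{m+1}$ at the heart of paragraph two, which is exactly where the Slow Game hypothesis $p_k = k+1$ enters, and which any later generalization to other insertion sequences will need to reckon with.
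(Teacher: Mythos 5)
The paper explicitly omits this proof, saying only that it is ``straightforward,'' so there is no reference argument to compare against; your task was to supply the missing proof, and you have done so correctly. The core insight you identify---that once $c_i = c_j$ and card $i$ is next seen (for the $(m+1)$th time), card $j$ is necessarily sitting at a position $\le p_m < p_{m+1}$, so after the insertion $j$ lies strictly ahead of $i$, and $i$ cannot be seen again until $j$ is---is exactly the right mechanism, and the induction together with the maximality of $\tau$ packages it cleanly. I checked the positional claim carefully: $j$ was placed at $p_m = m+1$ at time $T_j(m) \le \tau$, its position is nonincreasing until its next viewing (Proposition~\ref{basic}(1)), and it is not at position $1$ at the start of time $\tau+1$ (since $i$ is), so indeed $2 \le q \le m+1$; after $i$ is inserted at $m+2$, $j$ is at $q-1 \le m < m+2$. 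The order-preservation check and the two ``cannot be seen'' observations then carry through. A couple of small points worth noting: you silently use the induction hypothesis to get $d(\tau+1) = 1$ (maximality alone gives only $d(\tau+1) \ge 1$, or $\tau+1 = s$, which you should rule out by noting $d$ changes by at most $1$ per step); and your closing remark slightly oversells the role of $p_k = k+1$---the argument really only needs $(p_k)$ weakly increasing, i.e., $p_m \le p_{m+1}$, which is a useful observation for the generalizations in Section~\ref{sec:generalizing}.
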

The proof of the proposition is straightforward and we omit it here.
\begin{conjecture}\label{stable}
For any card $i$, there exists $t$ such that $c_i(t)=c_1(t)$.
\end{conjecture}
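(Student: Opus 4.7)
The plan is to prove by induction on $i$ the stronger statement that there exists a time $t$ at which $c_1(t) = c_2(t) = \cdots = c_i(t)$; the base case $i = 1$ is given by $t = 1$. For the inductive step, suppose $t_0$ is a time at which $c_1(t_0) = \cdots = c_{i-1}(t_0) = m$. Proposition~\ref{after} applied to each pair of cards in $\{1,\ldots,i-1\}$, combined with Proposition~\ref{basic}(5), pins down the possible configurations of $(c_1(t),\ldots,c_{i-1}(t))$ for $t\geq t_0$: they must take the form $c_1 = \cdots = c_k = m'+1$ and $c_{k+1} = \cdots = c_{i-1} = m'$ for some $m' \geq m$ and some $k \in \{0,1,\ldots,i-1\}$. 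A short case check then shows that the only card in $\{1,\ldots,i-1\}$ whose next viewing is compatible with both propositions is card $k+1$, so for $t \geq t_0$ the cards $1,2,\ldots,i-1$ are viewed in cyclic order $1,2,\ldots,i-1,1,2,\ldots$ (interleaved with viewings of cards $j \geq i$). In particular, there is an infinite increasing sequence $\sigma_0 = t_0 < \sigma_1 < \sigma_2 < \cdots$ with $c_1(\sigma_k) = \cdots = c_{i-1}(\sigma_k) = m+k$, and each of cards $1,\ldots,i-1$ is viewed exactly once in every interval $[\sigma_k,\sigma_{k+1})$.

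Setting $u_k$ to be the number of viewings of card $i$ in $[\sigma_k,\sigma_{k+1})$ and $D_k := (m+k) - c_i(\sigma_k)$, Proposition~\ref{basic}(5) gives $D_k \geq 0$, and a direct count yields the recurrence $D_{k+1} = D_k + 1 - u_k$. Since each $D_k$ is a non-negative integer, showing that $u_k \geq 2$ for infinitely many $k$ forces $D_{k^\ast}=0$ at some $k^\ast$, at which moment $c_1(\sigma_{k^\ast}) = c_2(\sigma_{k^\ast}) = \cdots = c_i(\sigma_{k^\ast})$, completing the induction. Corollary~\ref{cor:adding k to both sides} applied to the card-$1$ cycles that straddle the $\sigma_k$'s yields only the averaged bound $\liminf_{K\to\infty} K^{-1}\sum_{k < K} u_k \geq 1$, so $D_k$ is non-increasing on average; the strict inequality $u_k \geq 2$ is what would drive $D_k$ downward.

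The main obstacle is proving $u_k \geq 2$ infinitely often. Intuitively the length $\sigma_{k+1}-\sigma_k$ grows with $k$ because each viewing of card $1$ reinserts it at position $m+k+2$, and by Proposition~\ref{basic}(3) it then needs at least $m+k+1$ additional time-steps to return to the front, so this extra time must be filled by viewings of cards indexed $\geq i$, plausibly including repeated viewings of card $i$ itself. Turning this intuition into a proof appears to require either a quantitative comparison of $\sigma_{k+1}-\sigma_k$ against the maximum possible gap between consecutive viewings of card $i$, or a renewal-type argument identifying recurring deck configurations that repeatedly bring card $i$ to the front. Either ingredient is closely tied to the long-term structural conjectures raised in Section~\ref{sec:other objects}, and I expect essentially all of the technical difficulty to lie here, consistent with the authors' decision to leave the statement as a conjecture.
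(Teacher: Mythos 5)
This statement is left as an open conjecture in the paper --- the authors give no proof, and they explicitly remark that it ``seems interesting to try to compute good bounds for this time,'' so your assessment that the remaining difficulty is genuinely unresolved is correct, and there is nothing in the paper to compare your approach against.

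Your setup is sound and worth recording: the strengthened inductive hypothesis, the deduction from Propositions~\ref{after} and~\ref{basic}(5) that once $c_1(t_0) = \cdots = c_{i-1}(t_0)$ the cards $1,\ldots,i-1$ must thereafter be viewed in cyclic order, and the reduction to the behavior of $D_k = (m+k) - c_i(\sigma_k)$ with $D_{k+1} = D_k + 1 - u_k$ are all correct. However, one intermediate step is logically off as written: ``$u_k \geq 2$ for infinitely many $k$'' does \emph{not} by itself force $D_{k^\ast} = 0$, since $D_k$ is not monotone. For instance $u = (2,0,2,0,\ldots)$ makes $D_k$ oscillate between two positive values forever. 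What you actually need is something like $u_k \geq 1$ for all sufficiently large $k$ (so that $D_k$ is eventually non-increasing), combined with $u_k \geq 2$ at some $k$ where $D_k > 0$; or a quantitative averaging bound of the form $\liminf_K K^{-1}\sum_{k<K}(u_k - 1) > 0$, strictly stronger than the $\liminf \geq 1$ you extract from Corollary~\ref{cor:adding k to both sides}. And even $u_k \geq 1$ eventually is not immediate from the paper's results: the guarantee from Theorem~\ref{1andi} and its corollary is that card $i$ appears between consecutive viewings of card $1$, but the windows $[\sigma_k, \sigma_{k+1})$ can straddle those card-$1$ viewings, so the guaranteed viewing of card $i$ could fall into the wrong window. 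So the gap you identify is real, and it is slightly wider than your write-up suggests. Since the paper offers no proof, your honest conclusion --- that the quantitative step is where all the difficulty lies --- is the correct one.
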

Thus, for fixed $n$ we expect that after some sufficiently large time, the cards $1$ through $n$ will all have been seen the same number of times.  After this (conjectural) time, the dynamics of these cards are trivial.  It seems interesting to try to compute good bounds for this time.

\subsection{New cards are seen at quadratic rate}

In this section, we settle Conjecture 1 of \cite{NKS}; that is, we show that the functions $T_1(n)$ and $T_n(1)$ have growth rate $\Theta(n^2)$.

\begin{theorem}\label{T_1(n) is quadratic}
For all $n \geq 1$ we have $T_1(n)\le n^2 - n + 1$.
\end{theorem}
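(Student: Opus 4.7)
The plan is to write $T_1(n) = \sum_{i \geq 1} c_i(T_1(n))$ using the identity $\sum_n c_n(t) = t$ from Section~2, and then to bound each summand separately. Three ranges for $i$ naturally arise: $i = 1$, $2 \leq i \leq n$, and $i > n$, and I would argue that they contribute at most $n$, $(n-1)^2$, and $0$, respectively, to the total.

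The two endpoint ranges are immediate. By the very definition of $T_1(n)$, we have $c_1(T_1(n)) = n$. For $i > n$, Theorem~\ref{1andi} gives $T_i(1) > T_1(i-1) \geq T_1(n)$, so card $i$ has not yet been seen at time $T_1(n)$ and $c_i(T_1(n)) = 0$. The interesting range is $2 \leq i \leq n$: here I would look one step earlier, at time $T_1(n) - 1$. At that moment card $1$ has been viewed exactly $n - 1$ times, so Proposition~\ref{basic}(5) gives $c_i(T_1(n)-1) \leq c_1(T_1(n)-1) = n - 1$; and because card $i \neq 1$ is not the card viewed at time $T_1(n)$ itself, $c_i(T_1(n)) = c_i(T_1(n) - 1) \leq n - 1$.

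Combining the three estimates yields
\[
T_1(n) = \sum_{i \geq 1} c_i(T_1(n)) \leq n + (n-1)(n-1) + 0 = n^2 - n + 1,
\]
which is the theorem. There is essentially no obstacle to overcome: the entire proof is a bookkeeping argument whose only non-definitional input is the no-passing principle of Proposition~\ref{basic}(5). If anything, the step I would be most careful with is the middle one, since it is the only place where the monotonicity of the $c_i$ actually enters; but even there the argument is a single application of the inequality $c_i \leq c_1$ at the right moment, together with the observation that $c_i$ cannot change at time $T_1(n)$ when $i \neq 1$.
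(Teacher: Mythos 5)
Your proof is correct. The key inequality underlying both arguments is the same — each card $2, \ldots, n$ has been seen at most $n-1$ times when card $1$ is seen for the $n$th time — but you reach it by a cleaner route. The paper's proof works incrementally: it sets $t = T_1(n-1)+1$, examines the deck's layout at that moment, argues that each card $i \in \{2,\dots,n\}$ must be inserted behind card $1$ no later than its $(n-1)$st viewing, and thereby bounds the waiting time $T_1(n) - t$. You instead bound $T_1(n)$ directly via $T_1(n) = \sum_i c_i(T_1(n))$, getting the contribution of cards $2,\ldots,n$ from a single application of Proposition~\ref{basic}(5) at time $T_1(n)-1$, the vanishing contribution of cards $>n$ from Theorem~\ref{1andi}, and $c_1 = n$ by definition. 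Your version avoids any reasoning about deck positions or insertion behavior, relying only on the no-passing monotonicity, which makes it shorter and arguably more transparent. The paper's positional framing has the modest advantage that it transfers more directly to the generalization in Theorem~\ref{thm:T_1(k) in general}, where one must track exactly which cards sit in front of card $1$; but for the Slow Flashcard Game itself your argument is the cleaner one. One minor point worth flagging: the displayed statement of Theorem~\ref{1andi} contains a typo (it should read $T_1(i-1)+i-1 \leq T_i(1) < T_1(i)$, as the proof there makes clear), and you have correctly used the corrected form.
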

\begin{proof}
The result is clearly true for $n = 1,2$.  For $n > 2$, let $t = T_1(n - 1) + 1$.  At time $t$, the deck has the form
\[
\begin{array}{rccccc}
  \text{Positions:}& \cdots & n-1 & n & n+1 & \cdots \\
  \text{Cards:}&\cdots & \boxed{n} & \boxed{1} & \boxed{n+1} & \cdots \\
  \text{Times seen:}&  &  0 & n - 1 & 0
\end{array},
\]
the cards preceding $1$ are exactly those in the set $A=\{2,3,\dots,n\}$, and these include all the cards that have been seen so far.  Recall that $c_i(t)$ denotes the number of times that card $i$ has been seen at this moment.  Then we have $c_2(t) + c_3(t) + \cdots + c_n(t) = t - n + 1$.

Before we see card $1$ again, each of the cards in $A$ must be inserted behind card $1$.  For each card $i \in A$, we have that card $i$ will next be inserted behind card $1$ no later than the $(n - 1)$st viewing of card $i$.  At time $t$, card $i \in A$ has already been seen $c_i(t)$ times, so we need to see it at most $n - c_i(t) - 1$ more times before the next time we see card $1$.  Summing over all cards in $A$, we have
\[
T_1(n) - t \le 1 + \sum_{i=2}^n (n - c_i(t) - 1) = n^2 - n - t + 1,
\]
which completes the proof.
\end{proof}

From Theorem~\ref{1andi} it follows immediately that $T_1(n) \geq \binom{n + 1}{2}$ and so also that $T_1(n) \geq \frac{n^2}{2} + O(n)$.  Together with Theorem~\ref{T_1(n) is quadratic}, this suggests that actually $T_1(n) \sim c \cdot n^2$ for some constant $c \in [1/2, 1]$.  Numerical experiments suggest the following conjecture.

\begin{conjecture}\label{quadratic}
We have $T_1(n) \sim c n^2$ for $c \approx 0.85\ldots$.
\end{conjecture}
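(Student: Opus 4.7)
The plan is to attack the conjecture via a hydrodynamic scaling limit for the view-count profile. Numerical evidence suggests
\[
c_i(t)\;\sim\;\sqrt{t}\cdot f\bigl(i/\sqrt{t}\bigr)
\]
for some non-increasing $f\colon [0,L]\to[0,L]$ with $f(0)=L$, $f(L)=0$, and $\int_0^L f\,dv=1$ (the last forced by $\sum_i c_i(t)=t$). Once such an $f$ is identified, the identity $c_1(T_1(n))=n$ immediately yields $T_1(n)/n^2 \to 1/f(0)^2 = 1/L^2$, reducing the conjecture to showing $L^2 \approx 1/0.85$.

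The first task is to derive a functional equation for $f$ from a mean-field analysis. At scaled position $u=m/\sqrt{t}$, a card drifts forward at a rate equal to the asymptotic fraction of time steps whose insertion position is $\geq m$. A short counting argument shows that the view counts of successive front cards are asymptotically distributed on $[0,L]$ with density $F(\beta):=f^{-1}(\beta)$ (with $\int_0^L F\,d\beta = 1$), so this rate is $V(u)=\int_u^L F(\beta)\,d\beta$. Matching $V$ against the re-viewing rate at each scaled label $v$ then yields a consistency relation coupling $f$, $f'$, and $V$; together with the normalization and boundary conditions this should pin down $f$, perhaps only numerically.

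The second task is to rigorously establish the scaling limit. Because the process is fully deterministic, probabilistic compactness arguments fail; I would instead exploit monotonicity. Proposition~\ref{basic}, part 5, gives $c_i(t) \geq c_{i+1}(t)$; Proposition~\ref{after} gives stability of coincidences; and Theorem~\ref{T_1(n) is quadratic} together with the $\binom{n+1}{2}$ lower bound pins the overall scale. Combined via a Helly-type selection argument, these should produce subsequential limits with the right monotonicity and integrability, and the functional equation above should then force uniqueness.

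The main obstacle will be justifying the mean-field step. The distributional claim about insertion positions is an ergodicity statement for a nontrivial deterministic system, and is far from automatic. In particular, near the back of the active region (where $u$ is close to $L$) only card $1$ visits, the insertion positions there are strongly self-correlated, and the naive formula $V(u)$ vanishes so fast that the round-trip time of card $1$ does not come out right; one likely needs to correct the profile near $v=0$, perhaps by recognizing a near-plateau there corresponding to cards that have already caught up to card $1$ in the sense of Conjecture~\ref{stable}. A secondary obstacle is that the resulting functional equation is almost certainly transcendental, so the constant $0.85$ may only be computable numerically rather than in closed form.
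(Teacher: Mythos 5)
This statement is a \emph{conjecture} in the paper, not a theorem: the authors prove only the two-sided bound $\binom{n+1}{2} \leq T_1(n) \leq n^2 - n + 1$ (via Theorems~\ref{1andi} and~\ref{T_1(n) is quadratic}), locate $c\in[1/2,1]$, and report $c\approx 0.85$ from numerics. There is therefore no proof in the paper to compare yours against, and your proposal, as you yourself say, is a research program rather than a proof.

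That said, the program you outline is essentially the one the paper already gestures at in Section~\ref{sec:other objects}. Your scaling profile $f$ with $c_i(t)\sim\sqrt{t}\,f(i/\sqrt t)$ is a reparametrization of the paper's conjectural limit curve $\Gamma$: a point of $\Gamma$ at scaled position $(n/\sqrt{T_n(k)},\,k/\sqrt{T_n(k)})$ is exactly $(i/\sqrt t,\,c_i(t)/\sqrt t)$ at $t=T_n(k)$, so the graph of $f$ \emph{is} $\Gamma$, your normalization $\int_0^L f\,dv=1$ is the paper's ``area below $\Gamma$ equals $1$,'' and your observation that $c_1(T_1(n))=n$ forces $T_1(n)\sim n^2/f(0)^2$ is the same deduction as the paper's proposition that $\Gamma\cap\{y=0\}=(c,0)$ implies $T_n(1)\sim n^2/c^2$ (the two are interchangeable since $T_1(n)-T_n(1)=O(n)$ by Theorem~\ref{1andi}). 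Where you go a step further is in proposing a mean-field functional equation for $f$ via the empirical distribution of insertion positions; the paper offers only the crude bounds $x+y\ge 1$ and $x^2+y^2\lesssim 2$ on $\Gamma$ and no mechanism to pin it down.

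The genuine gap is precisely the one you flag, and it is large enough that the proposal cannot be called a proof. First, existence of the scaling limit is unproved in the paper and by you: monotonicity of $i\mapsto c_i(t)$ and the $\Theta(n^2)$ envelope give tightness of the rescaled profiles, so Helly does yield subsequential limits, but nothing yet rules out oscillation between distinct limit profiles along different subsequences. Second, and more seriously, the mean-field step (``insertion positions are asymptotically equidistributed with density $F=f^{-1}$'') is an ergodic-type statement about a fully deterministic, strongly self-correlated system; as you note, it visibly breaks near the edges, where the claimed drift rate $V(u)$ degenerates and the true dynamics of card~$1$ and of the near-plateau of caught-up cards (Conjecture~\ref{stable}) dominate. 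Without a proof of that equidistribution you have no functional equation, hence no way to determine $L=f(0)$, hence no value of $c$. And even granting the equation, one would still need to show it has a unique admissible solution and then extract $L^2\approx 1/0.85$, which (as you also say) is likely only numerical. So the proposal is a plausible route and consistent with the paper's own speculations, but every step after the setup remains open.
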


This agrees with the numerical data in Figure S3 of \cite{NKS}.  We remark that unfortunately our work provides no improvement in the bounds on the differences $T_i(n + 1) - T_i(n)$, so the following intriguing conjecture of Novikoff \emph{et al.} is still quite open.

\begin{conjecture}[{\cite[Conjecture 2]{NKS}}]\label{conj:NKS}
We have $T_i(n + 1) - T_i(n) \leq 2n$ for all $i$ and $n$.
\end{conjecture}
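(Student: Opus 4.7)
Since the authors themselves call Conjecture~\ref{conj:NKS} ``still quite open,'' my proposal is a sketch of an approach I would try rather than a complete strategy. I would begin with the case $i = 1$ and attempt to extend afterwards to general $i$ by using parts~4--5 of Proposition~\ref{basic} to treat the cards $\geq i$ as running a shifted copy of the same dynamics; that extension looks nontrivial in its own right and may depend on Conjecture~\ref{stable}.

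For $i = 1$, set $t_0 = T_1(n)$. The reasoning in the proof of Theorem~\ref{T_1(n) is quadratic} shows that at time $t_0 + 1$ card~$1$ sits at position $n + 1$, preceded by exactly the cards $\{2, 3, \ldots, n+1\}$ with the unseen card $n+1$ at position~$n$. Classify each step in the interval $(t_0, T_1(n+1)]$ as \emph{productive} if it moves card~$1$ one position toward the front and \emph{wasted} otherwise: card~$1$ must climb $n$ positions, so there are exactly $n$ productive steps, and writing $W$ for the number of wasted steps the conjecture for $i = 1$ becomes $W \leq n - 1$. A step is wasted precisely when the card~$j$ at the front is seen for its $(c_j + 1)$st time with $c_j + 2 < P(t)$, where $P(t)$ is the current position of card~$1$; its effect is to increment $c_j$ and to re-insert $j$ ahead of card~$1$, leaving $P$ unchanged.

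The amortization I would first try uses the potential
\[
\Phi(t) = \sum_{j \text{ ahead of card } 1} \max\bigl(0,\, P(t) - c_j(t) - 2\bigr).
\]
A direct check shows that $\Phi$ drops by exactly $1$ on each wasted step and is non-increasing on productive steps, so $W \leq \Phi(t_0 + 1)$. However, a short computation gives $\Phi(t_0 + 1) = n^2 - T_1(n) + |S_2|$, where $S_2$ is the (often empty) set of in-front cards with $c_j = n$; and by Conjecture~\ref{quadratic} the right-hand side is expected to be of order $n^2$, far too large. Thus this naive potential is not enough, and the approach needs to be refined---for instance by weighting the terms of $\Phi$ by functions of the in-front cards' current positions so that cards far back contribute less.

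The main obstacle, in my view, is that any bound $W \leq n - 1$ requires simultaneously sharp information on the view counts $c_j(t_0)$ for $j = 2, \ldots, n + 1$ and on how the in-front cards permute as card~$1$ climbs forward. A direct combinatorial charging that matches each wasted viewing to a later productive viewing (with at least one productive viewing left unmatched) is a natural alternative; in every small case I have checked by hand ($n \leq 7$) such a matching can be found, but a single card can be wasted multiple times (as, for example, card~$7$ is during the interval after $T_1(7)$), so the matching must be designed with care and cannot simply pair each wasted view with a productive view of the \emph{same} card. In the end, it is precisely the delicate interdependence of positions and view counts that has made Conjecture~\ref{quadratic} and related statements hard, and I would expect Conjecture~\ref{conj:NKS} to yield only to the same kind of genuinely new idea that would resolve them.
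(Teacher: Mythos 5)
The statement you were asked about is a \emph{conjecture} in this paper, attributed to Novikoff, Kleinberg and Strogatz; the authors explicitly write that their work ``provides no improvement'' on the bound $T_i(n+1) - T_i(n)$, and that the conjecture ``is still quite open.'' There is therefore no proof in the paper to compare against, and you were right to present an exploratory sketch rather than claim a proof.

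Your exploration is internally sound as far as it goes. The reduction of the $i=1$ case to the bound $W \leq n-1$ on wasted steps is correct: card $1$ sits at position $n+1$ at time $T_1(n)+1$, needs $n$ productive steps to return to the front, and the total number of non-card-$1$ viewings in $(T_1(n), T_1(n+1))$ is $T_1(n+1)-T_1(n)-1$, so the conjecture for $i=1$ is exactly $W \leq n-1$. Your verification that the potential $\Phi$ drops by exactly $1$ on a wasted step and is non-increasing on a productive step is also correct, and your computation that $\Phi(T_1(n)+1)$ is on the order of $n^2 - T_1(n)$ --- hence, conditional on Conjecture~\ref{quadratic}, of order roughly $0.15\,n^2$, far above the target $n-1$ --- correctly diagnoses why this particular amortization is much too weak. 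You have thus identified a genuine obstruction to the most natural potential-function argument, but not a route around it; your suggested refinements (position-weighted potentials, or a direct matching of wasted to productive viewings) are plausible directions but remain, as you say, undeveloped. Your observation that the general-$i$ case cannot simply be reduced to a ``shifted'' copy of the $i=1$ dynamics, and may interact with Conjecture~\ref{stable}, is also a fair caution. In short: the conjecture remains open, your honesty about that is appropriate, and your failed-approach analysis contains no errors I can detect --- but it is, by its own admission, not a proof.
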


Mark Lipson kindly provided the following result closely related to Theorem~\ref{T_1(n) is quadratic} (private communication following discussion with the audience at the MIT graduate student seminar SPAMS):
\begin{theorem}\label{T_n(1) is quadratic}
We have $T_n(1) \leq (n - 1)^2 + 1$.
\end{theorem}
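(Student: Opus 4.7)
The plan is to recycle the bookkeeping from the proof of Theorem~\ref{T_1(n) is quadratic}, but with card $n$ playing the role that card $1$ played there. Specifically, I would choose the benchmark time $t := T_1(n-1) + 1$. The picture drawn inside the earlier proof already guarantees that at time $t$ card $n$ sits at position $n-1$ (having just moved there for the first time, not yet viewed), card $1$ sits immediately behind it at position $n$ with counter $n-1$, and cards $2, 3, \ldots, n-1$ occupy positions $1, 2, \ldots, n-2$ in some order.

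The first step is to observe that during the interval from time $t$ through time $T_n(1) - 1$, only cards in $\{2, 3, \ldots, n-1\}$ can be viewed: card $1$ and cards $n+1, n+2, \ldots$ all lie strictly behind card $n$ at time $t$, so by the no-passing property (Proposition~\ref{basic}, part 4) none of them can reach the front before card $n$ does. Hence, writing $d_i$ for the number of viewings of card $i$ in that interval, we have $\sum_{i=2}^{n-1} d_i = T_n(1) - t$.

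The second step is to bound each $d_i$. As soon as card $i$ is viewed for the $(n-1)$st time, it is reinserted at position $p_{n-1} = n$, which is strictly behind card $n$: by part 3 of Proposition~\ref{basic}, card $n$ only moves forward, so its position never exceeds $n-1$ during the window. After that reinsertion, card $i$ cannot regain the front before card $n$, again by no-passing. Thus $d_i \leq (n-1) - c_i(t-1)$. Summing over $i = 2, \ldots, n-1$ and using the identity $\sum_{i=2}^{n-1} c_i(t-1) = (t-1) - c_1(t-1) = t - n$, one obtains
\[
T_n(1) - t \;\leq\; (n-2)(n-1) - (t - n),
\]
which rearranges to $T_n(1) \leq (n-1)^2 + 1$.

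There is no real obstacle here; the only point that requires even a moment's care is verifying that throughout the window card $n$ truly stays at a position $\leq n-1$ so that insertion at position $n$ lands strictly behind it, and that is immediate from the forward-only motion of cards. In effect, this is the same budget-counting argument as Theorem~\ref{T_1(n) is quadratic}, telescoped by one step so that $T_1(n-1)$ cancels cleanly out of the final estimate.
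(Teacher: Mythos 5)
Your proof is correct, but it takes a genuinely different route from the paper's. You run the window-budget argument: starting from the explicit deck configuration at time $t = T_1(n-1)+1$ (card $n$ at position $n-1$, card $1$ just behind it with counter $n-1$, cards $2,\ldots,n-1$ in front), you use no-passing to argue that only cards $2,\ldots,n-1$ can be viewed before $T_n(1)$ and that each is viewed at most $(n-1)-c_i(t-1)$ more times in the window, then telescope. This is essentially a re-run of the proof of Theorem~\ref{T_1(n) is quadratic}, truncated one viewing earlier so that the $T_1(n-1)$ term cancels.

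The paper (crediting Mark Lipson) instead gives a one-step global count at time $T_n(1)$ itself: since $T_n(1) < T_1(n)$ by Theorem~\ref{1andi}, card $1$ has been seen at most $n-1$ times at that instant; by Proposition~\ref{basic} part 5 the same bound then holds for cards $2,\ldots,n-1$; card $n$ has been seen once and no larger card has been seen at all. Summing counts gives $T_n(1) = \sum_i c_i(T_n(1)) \le (n-1)(n-1)+1$ immediately. That version is shorter and avoids any reasoning about the deck's geometry or about what happens within a time window; yours has the virtue of being structurally parallel to the proof of $T_1(n)\le n^2-n+1$, making the two bounds visibly two runs of the same budget calculation. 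Both are valid; the paper's is cleaner, yours is more uniform with its neighbor.
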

\begin{proof}
We have by Theorem \ref{1andi} that $T_n(1)<T_1(n)$. Therefore, at time $T_n(1)$, card $n$ is being seen for the first time, cards $1$ through $n - 1$ have each been seen at most $n - 1$ times, and no other cards have been seen.  Thus, at the earliest this happens at time $(n - 1)^2 + 1$.
\end{proof}

\subsection{Other results about the Slow Flashcard Game}

In this subsection, we seek to extend our knowledge about the function $T_n(k)$.  The first result improves on the naive result of Corollary~\ref{cor:adding k to both sides}, and may be viewed as a first attempt in the direction of Conjecture~\ref{stable}.

\begin{theorem}\label{root2k}
For all $k \geq 1$ and all $\ell \leq \sqrt{2k}+O(1)$ we have $T_{k}(\ell) < T_1(k + 1)$.
\end{theorem}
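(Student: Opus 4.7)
The plan is to establish $c_k(T_1(k+1)) \geq \sqrt{2k}+O(1)$, which is equivalent to the stated inequality. I will combine a bookkeeping identity for the position of card $k$ with a lower bound on the number of passes of card $k$ accumulated by time $T_1(k+1)$.

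The first step is the identity
\[
q_k(t) = k + \binom{c_k(t)+1}{2} - P_k(t),
\]
where $q_k(t)$ is card $k$'s position just after the operation at time $t$ and $P_k(t)$ is the total number of \emph{passes} of card $k$ (a pass being any operation in which some other card's insertion position is at least $q_k$, forcing $q_k$ to drop by one). This follows by tracking card $k$: its position starts at $k$, jumps upward by $\ell$ on its $\ell$-th viewing (contributing $\binom{c_k(t)+1}{2}$ in total upward displacement), and decreases by one on each pass. Since $q_k \geq 1$, the identity rearranges to
\[
\binom{c_k(T_1(k+1))+1}{2} \geq P_k(T_1(k+1)) - k + 1,
\]
so the theorem is reduced to proving $P_k(T_1(k+1)) \geq 2k - O(\sqrt k)$.

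Second, I would lower-bound $P_k(T_1(k+1))$ by exhibiting specific passes. Two easy sources are immediately available: the $k-1$ initial passes which move card $k$ from its initial position $k$ down to the front by time $T_k(1)$, and any operation whose insertion position satisfies $p \geq k$, each of which must pass card $k$ because $q_k \leq k$ throughout by Proposition~\ref{basic}. Operations of the latter type correspond to viewings whose post-increment count is at least $k-1$. Using the inequality $c_j(T_1(k+1)) \geq k+2-j$ for $j = 2, \ldots, k+1$ (obtained from the behind-push threshold analysis that appears in the proof of Theorem~\ref{T_1(n) is quadratic}: the $i$-th behind-push in the interval $(T_1(k),T_1(k+1)]$ requires post-increment count at least $k+1-i$, and sorting), the smallest-numbered cards contribute several such high-count viewings.

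The hard part will be that these two sources alone give only $P_k \gtrsim k + O(1)$, which via the identity forces only $c_k = O(1)$ rather than $\sqrt{2k}$. To close the gap I would apply the behind-push threshold analysis jointly to several successive intervals $(T_1(m), T_1(m+1)]$ for $m$ near $k$, coupling the constraints via a rearrangement/convexity argument that improves the intermediate lower bounds $c_j \geq k+2-j$ substantially when $j$ lies in the middle range. Each such improvement feeds additional type-(b) passes into $P_k$, and bootstrapping this feedback between counts and passes should raise $P_k$ into the $2k - O(\sqrt k)$ regime needed above. Making the bootstrap quantitative enough to extract the precise $\sqrt{2k}$ threshold is the step I expect to demand the most care, as it is a finer analogue of the extremal count argument used in the proof of Theorem~\ref{T_1(n) is quadratic}.
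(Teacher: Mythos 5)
Your bookkeeping identity $q_k(t) = k + \binom{c_k(t)+1}{2} - P_k(t)$ is correct, and the reduction of the theorem to the bound $P_k(T_1(k+1)) \geq 2k - O(\sqrt{k})$ is a valid reformulation. However, the reformulation is essentially neutral: since $q_k$ is bounded (indeed $q_k \leq c_k+1$), lower-bounding $c_k$ and lower-bounding $P_k$ are equivalent problems, so the identity by itself makes no progress. Your concrete sources of passes give only $P_k \gtrsim k$, hence only $c_k = O(1)$, and the bootstrap you propose to close the gap is not carried out — you acknowledge this yourself. As written, this is a genuine gap, and it is not obvious that iterating behind-push constraints over windows $(T_1(m), T_1(m+1)]$ would converge to the sharp $2k - O(\sqrt{k})$ needed.

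The paper's proof avoids this by working locally from time $T_1(k)+1$ rather than globally from time $0$. At that moment card $1$ sits at position $k+1$ and card $k$ sits at some position $a$ with $a \leq b := c_k(T_1(k)+1)$ (because $q_k \leq c_k + 1$ always, and strictly fewer after a pass). Since card $1$ only moves forward until $T_1(k+1)$, card $k$ must itself jump past it; and in its next $m$ viewings card $k$ covers total jump distance at most $(b+1)+(b+2)+\cdots+(b+m)$, which must be at least the initial gap $k+1-a \geq k+1-b$. Solving $bm + \binom{m+1}{2} \geq k+1-b$ for the minimal $m$ gives $b+m \geq \sqrt{b^2-b+2k}+O(1)$, minimized at $b=1$ to yield $\sqrt{2k}+O(1)$. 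The key extra input compared with your approach is the inequality $a \leq b$ at the specific time $T_1(k)+1$: this converts the gap $k+1-a$ into a quantity comparable to $k$ minus something small, and lets the quadratic growth of the jump lengths deliver the $\sqrt{2k}$ threshold in one pass, with no feedback loop required.
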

\begin{proof}
At time $T_1(k)+1$, the deck has the form
\[
\begin{array}{rcccccc}
  \text{Positions:} &\cdots & a         & \cdots & k           & k+1       & \cdots \\
  \text{Cards:}     &\cdots & \boxed{k} & \cdots & \boxed{k+1} & \boxed{1} & \cdots \\
  \text{Times seen:}&       & b         &        & 0           & k         &
\end{array}
\]
where card $1$ is in the $(k+1)$th position.  Assume card $k$ is in the $a$th position and has been seen $b$ times.  Note that by Theorem~\ref{1andi}, $b > 0$. Moreover, we have $0 < a \le b$. Before card $1$ is seen again, card $k$ must jump over all the cards between card $k$ and card $1$, as well as card $1$ itself.  In its next $\ell$ jumps, card $k$ jumps over at most $(b+1)+(b+2)+\cdots+(b+\ell)$ of these cards.  Thus, it must be seen at least $m$ more times, where $m$ is the minimal integer such that
\[
(b+1)+(b+2)+\cdots+(b+m) \ge k+1-a \ge k + 1 - b.
\]
So all together card $k$ is seen $b+m$ times before card $1$ is seen for the $(k+1)$th time. Simplifying the condition, we get that the minimal $m$ satisfies
\[
bm+\frac{m^2+m}{2}\ge k+1-b
\]
and so is given by the horrible formula that results from solving for the equality case; this gives that $b+m$ is $\sqrt{b^2-b+2k}+O(1)$.  The minimal possible value of this expression is $\sqrt{2k}+O(1)$ when $b=1$.  The result follows immediately.
\end{proof}

The method of the previous result can be iterated to replace $T_1(k + 1)$ with $T_1(k + i)$ (for $i$ not too large) and to replace $\sqrt{2k}$ with a correspondingly larger value.

The second result in this section may be viewed as a first step in thinking about Conjecture~\ref{conj:NKS}.

\begin{theorem}
For any $k$ and $i$ such that $\binom{i + 1}{2}<k$, we have $T_k(i + 1) - T_k(i) = i + 1$ (the minimal possible value).
\end{theorem}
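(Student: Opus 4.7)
The plan is to establish the equality $T_k(i+1) - T_k(i) = i+1$ by showing that card $k$ advances forward by exactly one position at each of the $i+1$ time steps following $T_k(i)$. The lower bound $T_k(i+1) - T_k(i) \geq i+1$ is immediate from Proposition~\ref{basic}, part 3, since card $k$ sits at position $p_i = i+1$ immediately after time $T_k(i)$. For the upper bound, card $k$ at position $m$ advances to position $m-1$ at the next step iff the card currently at the front of the deck is reinserted at position $\geq m$, i.e., has been seen $\geq m-1$ times (including the current viewing). So the minimum gap is realized provided each of the cards occupying positions $2, 3, \ldots, i+1$ of the deck at time $T_k(i)$ has been seen at least $i-1, i-2, \ldots, 0$ times, respectively; by Proposition~\ref{basic}, part 5, it suffices to prove that each of these $i$ cards has label $\leq k-1$, for then each has been seen at least $c_k(T_k(i)) = i$ times.

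The heart of the proof is therefore the positional claim: \emph{under $\binom{i+1}{2} < k$, no card of label $\geq k+1$ occupies any position in $\{2, \ldots, i+1\}$ at time $T_k(i)$}. Because unseen cards always appear in the deck in their original increasing order of label, this reduces to controlling the position of card $k+1$ alone (any larger unseen card lies behind it). Let $M(t)$ denote the number of forward shifts of card $k+1$ caused by the insertions at times $1, 2, \ldots, t$; while card $k+1$ is unseen, its position at time $t+1$ equals $k+1 - M(t)$. I claim that $M(T_k(i)) \leq \binom{i}{2}$; since the hypothesis $\binom{i+1}{2} < k$ is equivalent to $\binom{i}{2} < k - i$, this forces card $k+1$'s position to be $\geq k+1 - \binom{i}{2} \geq i+2$ and in particular confirms that card $k+1$ is still unseen.

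The bound $M(T_k(i)) \leq \binom{i}{2}$ is proved by induction on $i$ (with $k$ fixed). The base case $i=1$ holds because the first insertion at any position $\geq k+1$ comes from the first card to be seen $k$ times, which by Proposition~\ref{basic}, part 5, must be card $1$, at time $T_1(k)$; and $T_1(k) > T_k(1)$ by Theorem~\ref{1andi}, so $M(T_k(1)) = 0$. For the inductive step, I invoke the theorem at parameter $i-1$ (legitimate since $\binom{i}{2} < \binom{i+1}{2} < k$) to deduce $T_k(i) - T_k(i-1) = i$, so that exactly $i$ insertions occur during $(T_k(i-1), T_k(i)]$: the $i-1$ insertions of intermediate cards (each contributing at most one unit to $M$) and the insertion of card $k$ itself at position $p_i = i+1$ at time $T_k(i)$. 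Card $k$'s insertion does not disturb card $k+1$, because by the inductive bound plus the $i-1$ intermediate moves, card $k+1$'s position just before card $k$'s insertion is at least $k+1 - \bigl(\binom{i-1}{2} + (i-1)\bigr) = k+1 - \binom{i}{2} > i+1$, strictly greater than the insertion position. Hence $M(T_k(i)) \leq \binom{i-1}{2} + (i-1) = \binom{i}{2}$, completing the induction.

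The main obstacle I anticipate is the careful bookkeeping across the interval $(T_k(i-1), T_k(i)]$, and especially the verification that card $k$'s own insertion at time $T_k(i)$ is too shallow to disturb card $k+1$; it is in this step that the quadratic hypothesis $\binom{i+1}{2} < k$ enters critically, closing the induction and unlocking the positional claim on which the upper bound on the gap rests.
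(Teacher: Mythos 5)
Your proof is correct, and it arrives at the same essential bound (card $k+1$ has shifted forward by at most $\binom{i}{2}$ positions by time $T_k(i)$) as the paper's argument, but the bookkeeping is organized quite differently. The paper tracks the \emph{distance} between card $k$ and card $k+1$, observing that this distance is non-decreasing except at the moments card $k$ is itself inserted, where the $m$th insertion (at position $p_m = m+1$) can decrease it by at most $m$; summing $1 + 2 + \cdots + i < k$ gives the result directly, with no recursion on $i$. You instead track the \emph{absolute number of forward shifts} $M(t)$ of card $k+1$ and bound it by an induction that invokes the theorem's own conclusion at parameter $i-1$: since $T_k(i) - T_k(i-1) = i$, there are exactly $i$ insertions in the intervening window, of which $i-1$ each contribute at most one shift and the last (card $k$'s own insertion, too shallow to reach card $k+1$) contributes none. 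Your route is somewhat more laborious --- the paper avoids counting insertions between viewings by noting the distance can only grow there --- but it has the virtue of making the induction fully explicit, including the way the quadratic hypothesis closes the loop at each step. Both proofs use the same reduction of the upper bound to the positional claim about cards in positions $2, \ldots, i+1$ having label $\leq k-1$, and the same lower bound from Proposition~\ref{basic}, part 3.
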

\begin{proof}
By Theorem \ref{1andi}, at time $T_k(1)$ card $1$ has not been seen the $k$th time, and therefore card $k+1$ has not moved yet.  Thus, at time $T_k(1)$ the deck has the form
\[
\begin{array}{rcccc}
  \text{Positions:  }&    1  & \cdots & k+1         &  \cdots \\
  \text{Cards:  }& \boxed{k} & \cdots & \boxed{k+1} &  \cdots \\
  \text{Times seen:  }&  1   &        & 0           &
\end{array}
\]
where $k+1$ is in the $(k+1)$th position and has not been seen yet, and the cards in positions $2$ through $k$ are exactly the cards with numbers $1$, $2$, \ldots, $k - 1$.  
Choose any $i$ such that at time $T_k(i) + 1$, card $k$ is still in front of card $k+1$.  For each card $j$ in front of card $k$, we have $j<k$.  By part 5 of Proposition~\ref{basic}, card $j$ has been seen at least as many times as card $k$, so after we next see card $j$ we insert it following card $k$ in the deck.  Thus, on each time-step card $k$ moves forward by one position, and it follows that $T_k(i + 1) - T_k(i) = i + 1$ as long as card $k$ is still in front of card $k+1$.

Now we estimate the time when card $k$ is inserted after card $k+1$. Notice that at time $T_k(1)$, the distance between card $k$ and $k+1$ is $k$, and each time card $k$ is seen again, say the $m$th time, this distance is shortened by at most $m$. Therefore, at time $T_k(i+1)$, for any $i$ such that
$1+2+\cdots+i<k$, we know that card $k$ is still in front of card $k+1$. Therefore, for all such $i$ we have $T_k(i + 1) - T_k(i) = i + 1$, as claimed.
\end{proof}

\section{Other combinatorial objects}\label{sec:other objects}
In this section, we introduce and study some other objects associated to the flashcard game.
\subsection{Viewing sequence and counting sequence}
There are two sequences naturally associated to the flashcard game.  The first is the \emph{viewing sequence}, defined earlier, whose $t$th term $V_t$ records which card we see at time $t$.  The second is the \emph{counting sequence}, whose $t$th term $C_t$
records how many times we have seen the card that is visible at time $t$.  The first $30$ terms of the viewing sequence are
\[
1,2,1,2,3,1,3,2,4,3,4,1,2,4,3,5,1,5,4,2,5,3,6,4,6,5,1,6,2,3,\dots
\]
and the first $30$ terms of the counting sequence are
\[
1,1,2,2,1,3,2,3,1,3,2,4,4,3,4,1,5,2,4,5,3,5,1,5,2,4,6,3,6,6,\dots.
\]

\begin{proposition}
 The viewing sequence and counting sequence are equivalent, i.e., we can recover one from the other without going through the entire flashcard process.
\end{proposition}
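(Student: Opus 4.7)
The plan is to give explicit counting formulas in both directions, each of which can be evaluated on the given sequence without simulating the flashcard dynamics. In one direction, since $V_t$ is the card being viewed at time $t$ and $C_t$ records how many times that same card has appeared among $V_1,\dots,V_t$, we have the tautology
\[
C_t = |\{s : 1 \leq s \leq t,\ V_s = V_t\}|,
\]
recovering $C$ from $V$ by nothing more than counting occurrences.

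For the reverse direction, the key step is the following ordering lemma: for every fixed $k \geq 1$,
\[
T_1(k) < T_2(k) < T_3(k) < \cdots .
\]
That is, the cards are seen for their $k$-th time in strictly increasing order of label. I would prove this as a one-line consequence of part 5 of Proposition~\ref{basic}: for $i<j$, the inequality $c_i(t) \geq c_j(t)$ evaluated at $t = T_j(k)$ gives $c_i(T_j(k)) \geq k$ and hence $T_i(k) \leq T_j(k)$; the inequality is strict because two distinct cards cannot occupy the front of the deck on the same time-step.

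Granting the lemma, the recovery formula is immediate. Among all times $s$ with $C_s = k$, the $n$-th such $s$ is exactly $T_n(k)$, at which moment card $n$ is being viewed. Consequently
\[
V_t = |\{s : 1 \leq s \leq t,\ C_s = C_t\}|,
\]
which expresses $V$ in terms of $C$ alone. The step that carries any real content is the ordering lemma, but since it is essentially a restatement of part 5 of Proposition~\ref{basic} together with the observation that two cards cannot be viewed simultaneously, I do not anticipate a genuine obstacle; the proof as a whole amounts to the two displayed counting formulas separated by one short lemma.
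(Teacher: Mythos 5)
Your proof is correct and follows essentially the same approach as the paper: the forward direction is the same counting formula, and the reverse direction in the paper also relies on part 5 of Proposition~\ref{basic} to label the occurrences of each value $k$ in the counting sequence from left to right with $1,2,3,\dots$, which is exactly your formula $V_t = |\{s \leq t : C_s = C_t\}|$. Your explicit statement and proof of the ordering lemma $T_1(k) < T_2(k) < \cdots$ is a slightly more formal presentation of the same idea.
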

\begin{proof}
To go from the viewing sequence $(V_i)$ to counting sequence $(C_i)$, we simply count $C_i=\#\{j\le i\mid V_j=V_i\}$.  For the other direction, it is not hard to construct the viewing sequence from the counting sequence based on the
following two simple observations:
\begin{enumerate*}
\item by part 5 of Proposition \ref{basic}, we see card $i$ no less often than card $j$ if and only if $i<j$, and
\item as time increases, the number of times we see any particular card increases.
\end{enumerate*}
Then for each $k$, look at the subsequence of the counting sequence $b_{i_1},b_{i_2},\dots$ with all $b_{i_j}=k$. Let $a_{i_j}=j$. In other words, Label each occurrence of $k$ from left to right with the numbers $1, 2, 3, \dots$. Do this for all $k$ and we recover the viewing sequence.
\end{proof}

Given a word of combinatorial interest, one possible method of examining it is to apply the famous Robinson-Schensted-Knuth correspondence, henceforth RSK.  For background, definitions and properties of RSK, see for example \cite[Chapter 7]{EC2}.  After applying RSK to a sequence, we get a pair of semi-standard Young tableaux of the same shape; one is called the \emph{insertion tableau} and the other is called the \emph{recording tableau}.  Given an infinite sequence on $\Z_{>0}$ in which every term appears infinitely many times, one natural way to apply RSK is with the reversed order $1>2>3>\cdots$
for the insertion.\footnote{One could of course also use the usual order $1 < 2 < 3 < \ldots$, but it is not clear whether the resulting tableaux have any combinatorial significance.}  Using this ordering, the insertion tableau never stabilizes.  However, by definition of the viewing and counting sequences and the basic properties listed in the preceding proof, we have the following alternate definition of the recording tableau.  Let $\T$ be the tableau in the quarter-plane in which the box $(i, j)$ is filled with the value $T_i(j)$.\footnote{That this object $\T$ really is a tableau, \textit{i.e.}, that it increases along rows and columns, is straightforward: one set of comparisons is trivial and the other follows from Proposition~\ref{basic}, part 5.}
\begin{proposition}
Applying RSK with the reverse order $1 > 2 > \ldots$, the recording tableau for the viewing sequence is the tableau $\T$.  The recording tableau for the counting sequence is the transpose of $\T$.
\end{proposition}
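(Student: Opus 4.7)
The plan is to induct on $t$ on a combined statement: after $t$ steps of reverse-order RSK applied to the viewing sequence, $Q(t)_{i,j} = T_i(j)$, and the insertion tableau $P(t)$ has shape $\lambda(t) := (c_1(t), c_2(t), \ldots)$---a partition by Proposition~\ref{basic}, part 5---and satisfies
\[
P(t)_{i,j} = \lambda'_j(t) - i + 1.
\]
In other words, each column of $P(t)$, read from bottom to top, is simply $1, 2, 3, \ldots$ up to its length $\lambda'_j(t)$. This is manifestly a valid reverse-order SSYT, and computing a few initial steps by hand suggests it is the correct form.

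The inductive step is a direct bumping calculation. Suppose $V_{t+1} = n$. Because card $n$ reaches the front of the deck at time $t+1$, Proposition~\ref{basic}, part 5 applied at time $t+1$ gives $c_i(t) > c_n(t)$ for every $i < n$, which is equivalent to $\lambda'_{c_n(t)+1}(t) = n - 1$. Reading off row $1$ of $P(t)$ via the column formula, the first $c_n(t)$ entries are $\geq n$ while the entry in column $c_n(t)+1$ is exactly $n - 1$, so reverse-order insertion of $n$ bumps $n-1$ out of that column. The column formula repeats the pattern: in row $k$ the bumped value is $n - k + 1$, it always meets an entry $n - k$ in column $c_n(t) + 1$, and displaces it downward, until finally $1$ is pushed into row $n$ (whose existing entries are all $\geq 1$) and appended. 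The new box therefore sits at $(n, c_n(t) + 1) = (n, c_n(t+1))$, so its $Q$-label is $t + 1 = T_n(c_n(t+1))$, and a routine check confirms that the shape and the column formula for $P$ both survive to time $t+1$.

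For the counting sequence the plan is entirely dual: the insertion tableau is conjecturally the (unique) reverse-order SSYT of shape $\lambda(t)'$ whose columns, read bottom to top, are $1, 2, 3, \ldots$, and the analogous bumping calculation shows that inserting $C_{t+1}$ adds a box at $(c_{V_{t+1}}(t+1), V_{t+1})$---the transpose of the position for the viewing sequence---so that the resulting recording tableau is $\T^T$.

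The main obstacle is guessing the right strengthening: once the closed form $P_{i,j} = \lambda'_j - i + 1$ is in hand the bumping is mechanical, but without it the bumping path is hard to control. A secondary subtlety is that the argument really does need the \emph{strict} inequality $c_i(t) > c_n(t)$ for $i < n$ when $V_{t+1} = n$; this uses not only part 5 of Proposition~\ref{basic} but also the fact that card $n$ is viewed at the very next step.
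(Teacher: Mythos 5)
Your proposal is correct and takes essentially the same approach as the paper: both proofs hinge on identifying the closed form of the insertion tableau (each column reads $k, k-1, \ldots, 1$, so the new box at step $t+1$ with $V_{t+1}=n$ lands at $(n, c_n(t+1))$). The paper asserts this column structure rather tersely by citing Proposition~\ref{basic}, part 5; your induction with the explicit formula $P_{i,j} = \lambda'_j - i + 1$ and the observation that $\lambda'_{c_n(t)+1}(t) = n-1$ makes the same bumping argument fully rigorous.
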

\begin{proof}
Suppose we apply RSK to a finite prefix of the viewing sequence.  By Proposition~\ref{basic}, part 5, all columns of the insertion tableaux are of the form $k,k-1,\dots,2,1$. Therefore, suppose now we see card $r$ for the $m$th time. Then in the insertion tableaux there are already $m-1$ $r$'s, which are in the first $m-1$ columns. So when we insert the $m$th $r$, it bumps the top number on the $m$th column, and this number should be $r-1$. Therefore, this insertion pushes the $r-1$ numbers down and we insert $r$ in the first row of the $m$th column. As a result, the new spot is in the $m$th column and $r$th row, as recorded in the recording tableau. A very similar argument works for the counting sequence with rows and columns exchanged.
\end{proof}


\subsection{Two ways to describe the deck}
At time $t$, let the deck of cards be in the order $(u_1,u_2,u_3,\dots)$.  Instead of describing the state of the game in this way, we can alternatively give
the sequence $c_{u_i}(t)$, \textit{i.e.}, we can record the number
of times card $u_i$ has been seen at time $t$ (while suppressing the actual name of the card). We call this alternate representation the \emph{deck of times}.
For example, at time $t=100$, the deck of cards is
\[
4,10,7,11,5,6,8,9,12,1,2,3,13,14,15,\dots
\]
and the deck of times is
\[
10,6,9,4,10,10,9,8,0,11,11,11,0,0,\dots.
\]
\begin{proposition}
From the deck of times, we can recover the time $t$ and the deck of cards at time $t$.
\end{proposition}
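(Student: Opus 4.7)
The plan is a two-step recovery: extract $t$ first, then the deck of cards.

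First I would recover $t$ by summing the entries of the deck of times. The justification is that each time-step of the flashcard game increments exactly one counter, so after $t$ steps $\sum_{n \geq 1} c_n(t) = t$; since the deck of times is just the list of $c_n(t)$ reordered by the current deck permutation, summing its entries also returns $t$.

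Second, having recovered $t$, I would invoke the determinism of the flashcard game: given the insertion sequence $(p_k)$ and the initial state (the ordered deck $(1,2,3,\ldots)$ with $c_1(1) = 1$ and $c_n(1) = 0$ for $n \geq 2$), iterating the transition rule $t-1$ times produces the unique deck of cards at time $t$. No further information from the deck of times is needed beyond the already-recovered value of $t$.

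A more combinatorial reconstruction that uses the deck of times directly is also possible. Writing $(d_1, d_2, \ldots)$ for the deck of times at time $t$, I would identify the position of the previously-front card as the smallest $i \geq 2$ satisfying $p_{d_i} = i$, then undo the shuffle (placing the card at that position at the front and shifting the first $i-1$ cards back by one, decrementing the new count at position $2$) to obtain the deck of times at time $t-1$, and iterate down to $t = 1$. The main subtlety, and the part I would take most care to justify, is the ``smallest-$i$'' rule: if a candidate card $A$ still sits at its insertion position, then by Proposition~\ref{basic}, part 1, no subsequent insertion has taken place at a position $\geq$ the position of $A$ (otherwise $A$ would have shifted forward). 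Hence any later-inserted candidate $B$ must occupy a strictly smaller position than $A$, so the smallest candidate position corresponds to the most recently moved card and the reversal is well-defined. The rest is routine bookkeeping.
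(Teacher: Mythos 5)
Your proposal is correct but takes a genuinely different route from the paper's, which gives a slicker one-shot reconstruction. After recovering $t$ by summing the entries (you and the paper agree on this step), the paper decrements the first entry of the deck of times by one (to account for the current viewing of the front card) and then relabels positions directly: the positions holding the largest remaining value become cards $1,\ldots,m$ from left to right, the positions holding the next-largest value become $m+1,\ldots$, and so on. This is justified by parts 4 and 5 of Proposition~\ref{basic} (smaller-numbered cards have counts at least as large; among cards with equal counts the smaller-numbered one sits further forward), and it requires no simulation whatsoever. Your first method --- recover $t$, then forward-simulate the deterministic process --- is valid but sidesteps the combinatorial structure the proposition is gesturing at. Your second, backward-peeling method is also correct and more in the spirit of the result: the ``smallest candidate position'' rule is sound for exactly the reason you give, namely that a card sitting at its own insertion position $p_c$ has not shifted since being placed there, so by Proposition~\ref{basic}, part~1, no subsequent insertion can have occurred at a position $\geq p_c$, forcing any later-inserted candidate to occupy a strictly smaller position. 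One minor gap to flag: your backward recursion produces the deck of times at each earlier instant, but to actually output the deck of cards at time $t$ you would still need a forward pass over the recovered history (or fall back on your first method); the paper's direct relabeling avoids any pass at all.
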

\begin{proof}
In the deck of times, there are always finitely many nonzero terms, and adding them up we get $t$.  To write down the deck of cards, begin with the deck of times, decrement the first term by $1$, and choose the largest value that appears.  Suppose this value appears $m$ times; replace the appearances of this value from left to right with $1, 2, \ldots, m$.  Then choose the next-largest value that appears, and replace appearances from left to right with $m + 1, m + 2, \ldots$, and so on.  That this procedure works follows from parts 4 and 5 of Proposition~\ref{basic} (\emph{i.e.}, smaller-numbered cards are seen no fewer times than bigger-numbered cards, and for two cards that have been seen the same number of times, the smaller-numbered card appears in front of the bigger-numbered card.
\end{proof}

\begin{problem}
 At time $t$, given the deck of cards, what can we tell about the deck of times (more easily than running the insertion from scratch)?
\end{problem}
It seems very hard to construct the whole deck of times from the deck of cards and time $t$,\footnote{Note that it is not possible to reconstruct the deck of times from the deck of cards alone, for a trivial reason: the deck of cards is in the same order at time $3$ as it is at time $1$.  (It seems likely that this is the only example of such a repetition.)} but we can tell some partial information. For example, if $t > 2$ and we choose $k$ maximal so that card $k$ is not in position $k$, then $c_1(t)=k-1$.  We also know that if $c_i(t)>0$ then $c_i(t)\ge j-1$, where card $i$ is in the $j$th position of the deck.

\subsection{Limiting curve}
In this section we study the tableau $\T$ by examining the growth of $\T_i$, the finite portion of $\T$ whose entries are at most $i$, for large $i$.  It appears that for different large values of $i$, the outer boundaries of the $\T_i$ have a very similar shape. For example, the image below shows $\T_{20000} \setminus \T_{10000}$, i.e., it shows a point at position $(n, k)$ whenever $10000 < T_n(k) \leq 20000$.  The inner boundary curve is the boundary for $\T_{10000}$ and the outer boundary curve is the boundary for $\T_{20000}$.

\begin{figure}[ht]
\begin{center}
\includegraphics[height=2in]{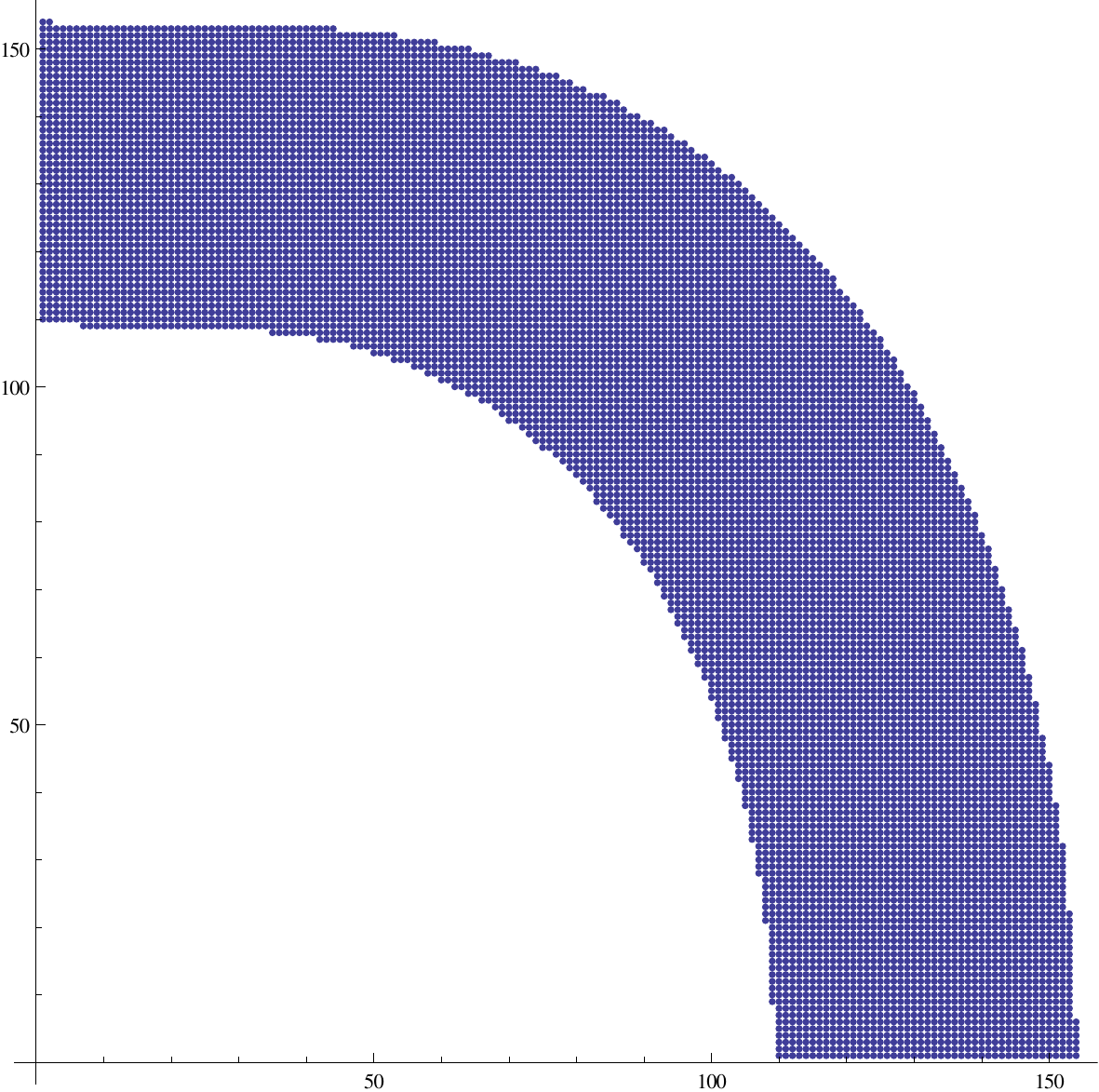}
\end{center}
\caption{The set of points $(n, k)$ such that $10000 < T_n(k) \leq 20000$; the inner and outer boundaries appear to be essentially identical up to rescaling.}
\end{figure}

An alternative way to describe this phenomenon is as follows.  Instead of plotting a dot at $(n,k)$ in the $xy$-plane for some range of values of $T_n(k)$, we plot points at positions
\[
\left( \frac{n}{\sqrt{T_n(k)}}, \frac{k}{\sqrt{T_n(k)}} \right).
\]
Given an interval $I \subset \Z_{>0}$, we denote by $A_I$ this rescaled plot of points for which $T_n(k)\in I$.  For example, Figure~\ref{fig:curve} shows the plot $A_I$ for $I=[100,10000]$.

\begin{figure}[ht]
\begin{center}
\includegraphics[height=2in]{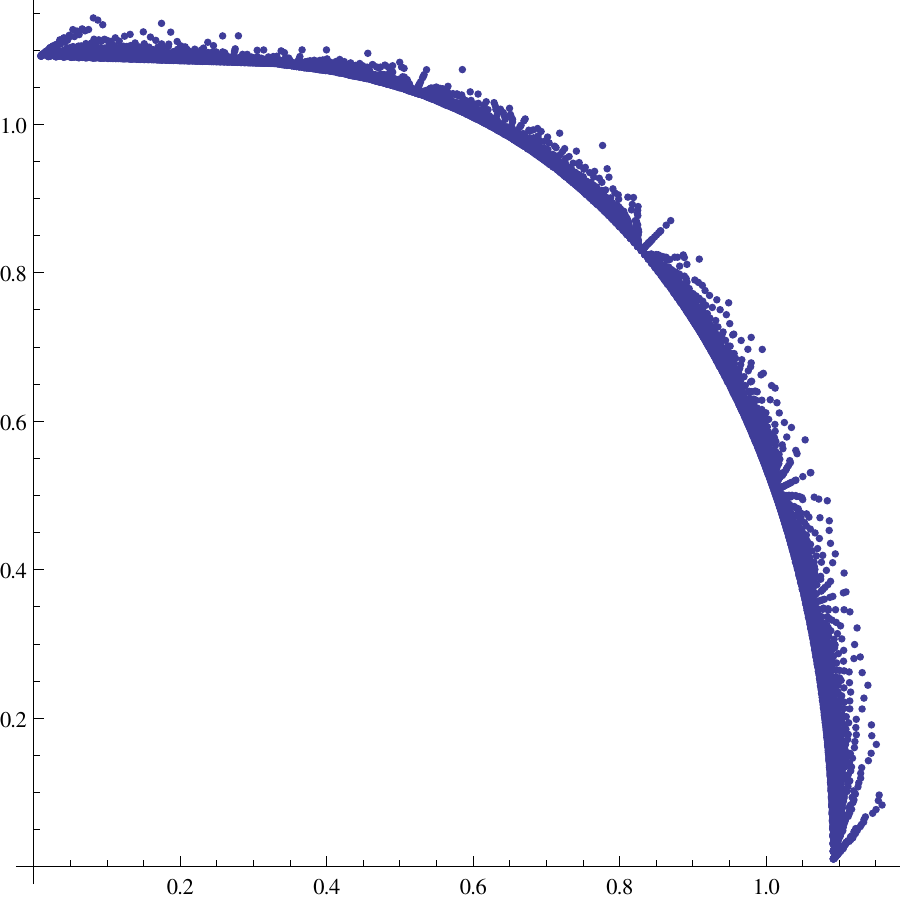}
\end{center}
\caption{The set $A_I$ for $I = [100, 10000]$, i.e., the set of all points $(n, k)/\sqrt{T_n(k)}$ for which $T_n(k) \in I$.}
\label{fig:curve}
\end{figure}

\begin{conjecture}
There exists a curve $\Gamma$ such that the area below $\Gamma$ in the first quadrant is equal to $1$ and $\frac{(n,k)}{\sqrt{T_n(k)}}$ lies outside of $\Gamma$ for all $n,k$. Moreover, as $T_n(k)$ grows larger, the point $\frac{(n,k)}{\sqrt{T_n(k)}}$ approaches $\Gamma$ in the following sense: for any $\epsilon>0$ and any $\theta\in[0,\frac{\pi}{2})$, there exists $K(\epsilon,\theta)>0$, such that for any two $k_1,k_2>K(\epsilon,\theta)$, we have
\[
|P_1-P_2|<\epsilon,
\]
where $P_i=\frac{(n_i,k_i)}{\sqrt{T_{n_i}(k_i)}}$ and $n_i=\frac{k_i}{\tan{\theta}}$ for $i=1,2$.
\end{conjecture}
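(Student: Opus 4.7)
The plan is to realize $\Gamma$ as the limit shape of the rescaled partial tableaux. For each $N \geq 1$, set
\[
\T_N = \{(n,k) \in \Z_{>0}^2 : T_n(k) \leq N\};
\]
the identity $\sum_n c_n(N) = N$ from Section~2 translates into $|\T_N| = N$ exactly, so (viewing $\T_N$ as a union of unit squares, as in the figures of the excerpt) the rescaled region $\T_N/\sqrt{N}$ has area $1$ for every $N$. The goal is to show that $\T_N/\sqrt{N}$ converges, in Hausdorff distance on compact sets, to a closed region $R$ whose outer boundary (the part not lying on the coordinate axes) is the desired $\Gamma$.

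The first step is compactness. By Proposition~\ref{basic}, part 5, the membership $(n,k) \in \T_N$ forces $c_i(N) \geq c_n(N) \geq k$ for every $i \leq n$, which together with $\sum_i c_i(N) = N$ yields $nk \leq N$; combined with the quadratic lower bounds on $T_1(k)$ and $T_n(1)$ derived from Theorem~\ref{1andi}, this places $\T_N/\sqrt{N}$ inside a fixed compact subset of the first quadrant uniformly in $N$. The regions are all staircases (their complement in the first quadrant is up-right-closed), so a Blaschke-type selection argument produces subsequential Hausdorff limits, each of which is a staircase of area exactly $1$.

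The main obstacle is uniqueness of the subsequential limit, and this is where I expect the bulk of the difficulty to lie. The natural strategy is to fix a ray of slope $\tan\theta$ through the origin, let $(n_r, k_r)$ be the lattice point on the ray at Euclidean distance $\approx r$, and study $f_\theta(r) = \sqrt{T_{n_r}(k_r)}/r$; uniqueness is equivalent to the existence of $\phi(\theta) = \lim_{r \to \infty} f_\theta(r)$ for every $\theta \in [0, \pi/2)$, after which $\Gamma$ is parametrized radially as $\phi(\theta)^{-1}(\cos\theta, \sin\theta)$. Proving such a limit exists seems to require either a subadditivity/monotonicity property of $T_n(k)$ amenable to a Fekete-style lemma, or a variational characterization of $R$ in the spirit of the Vershik--Kerov/Logan--Shepp limit-shape theorem for Young tableaux. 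Both approaches appear to demand substantially sharper quantitative control on the dynamics than is currently available; the bound in Conjecture~\ref{conj:NKS} is an example of the kind of input one would want.

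Finally, once a unique $R$ has been produced, the ``outside'' statement is essentially automatic: the point $(n,k)/\sqrt{T_n(k)}$ sits on the outer boundary of $\T_{T_n(k)}/\sqrt{T_n(k)}$, and since both this set and $R$ have area exactly $1$, the latter cannot strictly contain the former, so such boundary points must lie weakly outside $R$. The ray-convergence condition stated in the conjecture then follows at once from the existence of $\phi(\theta)$.
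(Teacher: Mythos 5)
The statement you are addressing is presented in the paper as a \emph{conjecture}, not a theorem, and the paper provides no proof: immediately after stating it, the authors write ``We have not proved the existence of the curve, but assuming it does, we provide some nice preliminary bounds for its location.'' The only rigorous content the paper offers on this topic is the subsequent proposition bounding the region between the line $x+y=1$ and (asymptotically) the circle $x^2+y^2=2$, which your proposal does not reproduce but which is in the same spirit as the compactness bounds you sketch. So there is no ``paper's proof'' to compare against, and the proper assessment is simply whether your proposal actually closes the gap the authors left open.

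It does not, and you say so yourself: the crux of the conjecture is the existence of a single limit shape, and your third paragraph candidly concedes that proving uniqueness of subsequential limits (equivalently, existence of $\phi(\theta)=\lim_r \sqrt{T_{n_r}(k_r)}/r$) ``appears to demand substantially sharper quantitative control on the dynamics than is currently available.'' That is exactly the open problem; reducing it to a hoped-for Fekete or Vershik--Kerov/Logan--Shepp mechanism is a reasonable plan of attack but not a proof. Two further points worth flagging even within the plan. First, the ``outside'' step in your last paragraph has a logical gap: from the fact that $R$ and $\T_{T_n(k)}/\sqrt{T_n(k)}$ both have area $1$ (so neither strictly contains the other), you may conclude that \emph{some} point of the staircase boundary lies outside $R$, but not that the \emph{particular} corner $(n,k)/\sqrt{T_n(k)}$ does; you would need a monotonicity statement along rays (e.g.\ that $\T_N/\sqrt{N}$ shrinks toward $R$ from the outside) to localize the claim to that corner. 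Second, the conjecture asserts the ``outside'' property for \emph{all} $n,k$ including small ones, which is a finite, non-asymptotic statement that no limit-shape argument alone can deliver; some separate uniform inequality (in the spirit of the paper's $x+y>1$ bound, but matching $\Gamma$ exactly) would be required.
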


We have not proved the existence of the curve, but assuming it does, we provide some nice preliminary bounds for its location.
\begin{proposition}
 All points in $A_{[1,\infty)}$ lie above the line $x+y=1$.  Also, for any $\varepsilon > 0$ and sufficiently large $M = M(\varepsilon)$, all points in $A_{[M, \infty)}$ lie below the circle $x^2+y^2 = 2 + \varepsilon$ in the first quadrant.
\end{proposition}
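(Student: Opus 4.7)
The plan is to rephrase both inequalities as bounds on $T_n(k)$ in terms of $n$ and $k$, and then apply the estimates already proved earlier in the section.

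For the first statement, note that $(x+y)^2 = (n+k)^2/T_n(k)$, so the point $(n,k)/\sqrt{T_n(k)}$ lies above the line $x+y=1$ if and only if $T_n(k) < (n+k)^2$. To obtain this upper bound I would combine Corollary~\ref{cor:adding k to both sides} (supplemented by Theorem~\ref{1andi} to cover $k=1$), which gives $T_n(k) < T_1(n+k-1)$, with Theorem~\ref{T_1(n) is quadratic}, which gives $T_1(n+k-1) \le (n+k-1)^2 - (n+k-1) + 1$. A short calculation shows the right-hand side is strictly less than $(n+k)^2$ for all $n,k \ge 1$.

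For the second statement, the desired condition $x^2+y^2 < 2+\varepsilon$ becomes $T_n(k) > (n^2+k^2)/(2+\varepsilon)$, so I would aim for a matching lower bound of the shape $T_n(k) \ge (n^2+k^2)/2 - C$. I would decompose $T_n(k) = T_n(1) + \sum_{j=1}^{k-1}\bigl(T_n(j+1) - T_n(j)\bigr)$ and bound each piece separately. For $T_n(1)$, Theorem~\ref{1andi} together with the easy observation $T_1(m) \ge \binom{m+1}{2}$ gives $T_n(1) \ge \binom{n}{2} + n - 1 = (n^2+n-2)/2$. For each consecutive difference, part 3 of Proposition~\ref{basic} (the slow-marching property) implies that immediately after its $j$th viewing card $n$ occupies position $j+1$ and hence cannot return to the front in fewer than $j+1$ subsequent time-steps, so $T_n(j+1) - T_n(j) \ge j+1$. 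Summing gives $\sum_{j=1}^{k-1}(j+1) = k(k+1)/2 - 1$, and combining the two bounds yields $T_n(k) \ge (n^2+k^2)/2 + (n+k)/2 - 2$.

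To convert this lower bound into the statement of the proposition I need the hypothesis $T_n(k) \ge M$ to force $n^2+k^2$ to be large, and this is where the first half of the proposition feeds back into the argument. From $T_n(k) < (n+k)^2 \le 2(n^2+k^2)$ I obtain $n^2+k^2 > M/2$, and therefore
\[
\frac{n^2+k^2}{T_n(k)} \;\le\; \frac{n^2+k^2}{(n^2+k^2)/2 - 2} \;=\; \frac{2}{1-4/(n^2+k^2)} \;\le\; \frac{2}{1-8/M},
\]
which is at most $2+\varepsilon$ once $M \ge 8(2+\varepsilon)/\varepsilon$. There is no step here that looks genuinely difficult; the only mild subtlety is noticing that the upper bound from the first half and the lower bound from the slow-marching telescoping sum play complementary roles, so that proving the ``above the line'' part is precisely what makes the quantifier on $M$ easy to produce.
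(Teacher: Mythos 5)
Your proof is correct and follows essentially the same route as the paper's: the upper bound $T_n(k) < (n+k)^2$ from Corollary~\ref{cor:adding k to both sides} plus Theorem~\ref{T_1(n) is quadratic} for the line, and the telescoping lower bound $T_n(k) \ge \binom{n}{2} + \binom{k+1}{2} \sim (n^2+k^2)/2$ for the circle. The paper leaves the choice of $M(\varepsilon)$ implicit, so your observation that the first half of the proposition is exactly what forces $n^2+k^2$ large is a welcome clarification rather than a deviation.
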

\begin{proof}
From Theorem~\ref{T_1(n) is quadratic} and Corollary~\ref{cor:adding k to both sides} it follows that
\[
T_n(k)<(n+k-1)^2-(n+k-1)+1=(n+k)^2-3(n+k)+3<(n+k)^2.
\]
Thus, for each point $(x, y) \in A_{[1, \infty)}$ we have
\[
x+y=\frac{n}{\sqrt{T_n(k)}}+\frac{k}{\sqrt{T_n(k)}}>\frac{n}{n+k}+\frac{k}{n+k}=1.
\]

On the other hand, we have from Theorem~\ref{1andi} (applying it successively) that
\[
T_1(n)>T_n(1)\ge \binom{n}{2}.
\]
Similar to $T_1(i)>T_1(i-1)+i-1$, we have $T_n(k)\ge T_n(k-1)+k$ and so also
\[
T_n(k)\ge \binom{n}{2}+\binom{k + 1}{2}\sim \frac{n^2+k^2}{2},
\]
from which the second half of the result follows.
\end{proof}

The study of the behavior of the plot $A$ (or the curve $\Gamma$) can tell us more information about the growth of $T_n(k)$. For example, the following result connects the curve $\Gamma$ to Conjecture~\ref{quadratic}.

\begin{proposition}
Suppose that $\Gamma$ exists and intersects the $x$-axis at the point $(c, 0)$.  Then $T_n(1) \sim n^2/c^2$.
\end{proposition}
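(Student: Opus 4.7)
The plan is to prove that the rescaled point $P_n := (n,1)/\sqrt{T_n(1)}$ converges to $(c,0)$; the conclusion $T_n(1)\sim n^2/c^2$ then follows by reading off the first coordinate, since $n/\sqrt{T_n(1)}\to c$ is equivalent to $T_n(1)/n^2\to 1/c^2$.

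First I would observe that the bound $T_n(1)\ge\binom{n}{2}$, which was just established in the preceding proposition, forces $T_n(1)\to\infty$, so the second coordinate $1/\sqrt{T_n(1)}$ of $P_n$ tends to $0$. Every accumulation point of $(P_n)$ therefore lies on the $x$-axis. Next, for a lower bound on $\liminf n/\sqrt{T_n(1)}$, write $\Gamma$ near $(c,0)$ as $x=g(y)$ with $g$ continuous and $g(0)=c$. The conjecture states that $P_n$ lies outside $\Gamma$, so its $x$-coordinate is at least $g(1/\sqrt{T_n(1)})$, and this tends to $c$. For the matching upper bound, I would invoke the second part of the conjecture: as $T_n(1)\to\infty$, the point $P_n$ approaches $\Gamma$. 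Since $P_n$ stays near the $x$-axis and $\Gamma$ meets the $x$-axis only at $(c,0)$ (by continuity of $\Gamma$ at that point), the $x$-coordinate of $P_n$ must also approach $c$. Combining the two bounds gives $n/\sqrt{T_n(1)}\to c$, which is the desired asymptotic.

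The main obstacle is the upper-bound step, because the conjecture as written only guarantees Cauchy-type convergence along rays of fixed angle $\theta\in(0,\pi/2)$, while the sequence $P_n$ traverses angles $\arctan(1/n)$ that degenerate to $\theta=0$. To make the argument fully rigorous one must either strengthen the interpretation of "approaches $\Gamma$" to a uniform statement near the $x$-intercept, or compare $P_n$ to the ray-limit points $P(\theta_m)$ for a sequence of small angles $\theta_m\to 0$ and exploit monotonicity of $T_n(k)$ in both arguments to sandwich $n/\sqrt{T_n(1)}$ between quantities tending to $c$. In either approach, the essential geometric content is the same: once we accept that the scaled points cluster on $\Gamma$, the fact that the $P_n$ lie in a horizontal strip collapsing to the $x$-axis pins their limit to the unique $x$-intercept of $\Gamma$.
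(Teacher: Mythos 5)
Your proposal takes essentially the same route as the paper: both look at the scaled points $P_n = (n,1)/\sqrt{T_n(1)}$, observe that the $y$-coordinate tends to $0$ because $T_n(1) \geq \binom{n}{2} \to \infty$, and conclude that the $x$-coordinate must tend to the $x$-intercept $c$ of $\Gamma$. The paper's own proof is a three-sentence sketch that does not split into lower and upper bounds and does not acknowledge the issue you raise.

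What you add is worthwhile. The lower bound (each $P_n$ lies on or outside $\Gamma$, so its $x$-coordinate is at least $g(1/\sqrt{T_n(1)})$, which tends to $g(0) = c$ by continuity) is a clean and fully rigorous use of the first half of the conjecture. More importantly, you correctly flag that the ``approaches $\Gamma$'' clause of the conjecture, as literally stated, only gives a Cauchy-type statement for points along a \emph{fixed} ray of angle $\theta$, with $K(\epsilon,\theta)$ allowed to depend on $\theta$, and that the sequence $(n,1)/\sqrt{T_n(1)}$ does not live on a fixed ray --- its angle $\arctan(1/n)$ degenerates to $0$, and $\theta = 0$ is itself problematic in the conjecture's parametrization via $n_i = k_i/\tan\theta$. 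The paper's proof silently assumes the stronger, and probably intended, statement that the scaled points accumulate on $\Gamma$ uniformly in a neighborhood of the $x$-intercept. Your two suggested repairs (strengthen the convergence hypothesis near the intercept, or sandwich $P_n$ using small-angle ray limits together with monotonicity of $T_n(k)$) are both reasonable ways to make the argument honest. In short: same geometric idea as the paper, but you have located a genuine gap in the paper's reasoning (equivalently, a looseness in the conjecture's formulation) that the paper does not address.
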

\begin{proof}
At time $t=T_n(1)$, consider the associated point $\frac{(n, 1)}{\sqrt{T_n(1)}}$.  Let $n$ be very large.  In the limit, $\frac{1}{\sqrt{T_n(1)}}$ goes to $0$, and thus the place the curve touches the $x$-axis has $x$-coordinate $\lim_{n \to \infty} \frac{n}{\sqrt{T_n(1)}}=c$.  The result follows immediately.
\end{proof}

\section{Changing the insertion sequence}\label{sec:generalizing}
We can generalize the flashcard procedure as follows: to each sequence $(p_k)_{k \in \Z_{> 0}}$, associate the flashcard game that moves the front card to position $p_k$ when it is seen for the $k$th time.  Thus, the dynamical system studied above is the case $p_k = k + 1$, while \cite{NKS} note that their ``recap schedule'' is the case $p_k = 2^k$.

The first question of interest to \cite{NKS} is whether a flashcard schedule exhibits ``infinite perfect learning''.  In our case, this asks whether we eventually see every card (equivalently, whether we see every card infinitely often).  It turns out that this property is easy to characterize in terms of the sequence $(p_k)$.

\begin{theorem}
A sequence $(p_k)$ results in every card being seen infinitely often if and only if $(p_k)$ is unbounded.
\end{theorem}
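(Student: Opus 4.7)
The plan is to prove both implications. The ``only if'' direction, proved by contrapositive, is a direct consequence of Proposition~\ref{basic}, part 1: if $(p_k)$ is bounded by some $M$, then every insertion takes place at a position at most $M$, so cards at positions strictly greater than $M$ are never disturbed by any move. Since card $M+1$ begins at position $M+1$, it remains there forever, so it is never seen---much less seen infinitely often.

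For the ``if'' direction, assume $(p_k)$ is unbounded, and suppose for contradiction that some card $i$ is seen only finitely many times, say $c$ times in total. After time $T_i(c)$, card $i$ is never at the front again, so each subsequent insertion either moves card $i$ forward by one position (if that insertion position is at least the current position of card $i$) or leaves card $i$ fixed. Thus the position of card $i$ is a nonincreasing function of time, bounded below by $2$: if it ever reached $1$, card $i$ would be at the front and would be seen once more. Consequently there is a time $t^*$ and an integer $m^* \ge 2$ such that card $i$ occupies position $m^*$ for all $t \ge t^*$, and every insertion after $t^*$ must happen at a position strictly less than $m^*$, since otherwise card $i$ would advance.

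Let $S'$ be the (finite) set of cards occupying positions $1, 2, \ldots, m^*-1$ at time $t^*$. Because every insertion after $t^*$ lands at a position less than $m^*$, $S'$ is closed under the dynamics: cards in positions $\ge m^*$ never move, and cards in positions $1, \ldots, m^*-1$ simply permute among themselves. Since at each time step after $t^*$ exactly one card of $S'$ is seen, the pigeonhole principle yields some $j_0 \in S'$ whose counter $c_{j_0}(t)$ tends to infinity. Using the unboundedness of $(p_k)$, pick $k > c_{j_0}(t^*)$ with $p_k \ge m^*$. Then when card $j_0$ is seen for the $k$-th time---which does occur, at some moment after $t^*$---it is reinserted at position $p_k \ge m^*$, contradicting the fact that all insertions after $t^*$ happen at positions $< m^*$.

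The main obstacle is the stabilization step at the start of the reverse direction: one must notice that the position of a card seen only finitely often is eventually monotone nonincreasing, that it must settle at some $m^* \ge 2$, and that this traps the dynamics inside the finite set $S'$. Once this finite trap has been identified, the unboundedness of $(p_k)$ destroys it in one stroke via pigeonhole, and the remainder of the argument is routine.
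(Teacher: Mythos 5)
Your proof is correct, and it rests on the same key idea as the paper's: the finitely many cards in front of card $i$ form a system that one of them must leave once it has been seen enough times (which unboundedness of $(p_k)$ guarantees), forcing card $i$ forward. You package this as a proof by contradiction via the nonincreasing-position/stabilization observation, whereas the paper runs a terser direct iteration (``eventually some card in front of $i$ jumps behind it, so $i$ advances; repeat''); your version spells out the pigeonhole step that the paper leaves implicit.
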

\begin{proof}If $p_k<N$ for all $k$, then card $N$ can never move forward, so there is no chance to see it. On the other hand, if the sequence is not bounded, then for any card $i$ starting at any stage in the process, we have that eventually some card in front of $i$ will be seen sufficiently many times to be inserted after $i$.  Thus, card $i$ will eventually move to the front of the deck.  The result follows.
\end{proof}

For the rest of this section, we suppose that $(p_k)$ is unbounded.  The statistics of interest in \cite{NKS} included how often we see the first card (i.e., the growth of the function $T_1(n)$), how long it takes to see the $n$th card for the first time (i.e., the growth of the function $T_n(1)$), and how long we have to wait between instances of seeing the same card (i.e., the behavior of $T_i(n + 1) - T_i(n)$).  We now investigate these questions in our more general setting.

%
%
%
%
%

If we assume that $(p_k)$ is (weakly) increasing, many results in Section~\ref{sec:slow} can be generalized to this section.

\begin{theorem}\label{thm:interweaving in general}
If $(p_k)$ is increasing then for all $n$ we have $T_1(n) + p_n - 1 \leq T_{p_n}(1) < T_1(n + 1)$ and $T_{p_{n-1}}(1+k)<T_1(n+k)$ for all $k\ge 0$.
\end{theorem}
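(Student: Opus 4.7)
The plan is to imitate the proofs of Theorem~\ref{1andi} and Corollary~\ref{cor:adding k to both sides}, adapting their key deck observations to the general monotone setting (where I implicitly take $(p_k)$ to be strictly increasing, since the lower bound of the first assertion forces this). For the first chain $T_1(n) + p_n - 1 \leq T_{p_n}(1) < T_1(n+1)$, I will describe the deck at time $T_1(n)+1$ and then extract the two bounds from parts 3 and 4 of Proposition~\ref{basic}. The claim is that at this moment card $1$ occupies position $p_n$, card $p_n$ occupies position $p_n - 1$, and card $p_n$ has not yet been seen. The first fact is by construction. For the other two, the key observation is that before time $T_1(n)$ no insertion has ever used a position $\geq p_n$: monotonicity of $(p_k)$ forces any such insertion to come from a card seen at least $n$ times, but Proposition~\ref{basic} part 5 prevents any card other than $1$ from reaching $n$ viewings before card $1$ does, and card $1$'s own earlier insertions used positions $p_1, p_2, \ldots, p_{n-1}$, all strictly less than $p_n$. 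Hence just before time $T_1(n)$ card $p_n$ still sits in its original position $p_n$; card $1$'s insertion bumps it to $p_n - 1$ and it remains unseen. Parts 3 and 4 of Proposition~\ref{basic} then give $T_{p_n}(1) \geq T_1(n) + p_n - 1$ and $T_{p_n}(1) < T_1(n+1)$.

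For the second assertion $T_{p_{n-1}}(1+k) < T_1(n+k)$, I will induct on $k$. The base case $k = 0$ is the upper bound just proved (applied with $n$ replaced by $n - 1$), and the inductive step will follow once I establish the sub-claim that for every $m \geq n - 1$, card $p_{n-1}$ lies strictly in front of card $1$ at time $T_1(m)+1$. The sub-claim is proved by a secondary induction on $m$, with base case $m = n - 1$ immediate from the deck description in the first paragraph. For the step, the previous case together with Proposition~\ref{basic} part 4 forces card $p_{n-1}$ to be seen at least once between $T_1(m)+1$ and $T_1(m+1)$, so at time $T_1(m+1)$ it has been viewed $s$ times with $1 \leq s \leq m + 1$ (the upper bound from Proposition~\ref{basic} part 5). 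Its most recent insertion therefore placed it in position $p_s \leq p_{m+1}$, and since non-front cards never move backward it still sits in some position $q' \leq p_{m+1}$ with $q' \geq 2$ (because position $1$ is occupied by card $1$). Card $1$'s insertion into $p_{m+1}$ at this time step then shifts card $p_{n-1}$ to position $q' - 1 < p_{m+1}$, placing it strictly in front of card $1$. Given the sub-claim, Proposition~\ref{basic} part 4 yields a viewing of card $p_{n-1}$ in each of the $k$ open intervals $\bigl(T_1(n+j-1), T_1(n+j)\bigr)$ for $j = 1, \ldots, k$, in addition to the viewing before $T_1(n)$ coming from the $k = 0$ case, so $c_{p_{n-1}}(T_1(n+k) - 1) \geq k + 1$ and hence $T_{p_{n-1}}(1+k) < T_1(n+k)$ as required.

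The main technical point is the deck description at time $T_1(n)+1$. In the slow game this was essentially automatic because $p_n = n + 1$ is the largest position ever touched by time $T_1(n)$ and is reached only by card $1$'s most recent insertion; in the general setting one must genuinely combine monotonicity of $(p_k)$ with Proposition~\ref{basic} part 5 to rule out the possibility that some earlier insertion at a position $\geq p_n$ has already displaced card $p_n$.
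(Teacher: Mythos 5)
Your proof is correct and takes essentially the same route as the paper's, which simply says the first chain of inequalities ``follows from the same argument as Theorem~\ref{1andi}'' and the second is ``similar to Corollary~\ref{cor:adding k to both sides}''; you have merely made explicit the two points those proofs leave implicit, namely the deck description at time $T_1(n)+1$ (card $p_n$ occupies position $p_n-1$ and is still unseen, since by Proposition~\ref{basic} part 5 no card can have been seen $n$ times before card $1$ is, so no insertion before time $T_1(n)$ can reach position $p_n$) and the inductive verification that card $p_{n-1}$ stays in front of card $1$ after each of card $1$'s subsequent insertions. You also rightly note that the argument requires $(p_k)$ to be \emph{strictly} increasing, a hypothesis the paper states loosely (indeed the lower bound $T_1(n)+p_n-1\le T_{p_n}(1)$ can fail when $p_{n-1}=p_n$).
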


\begin{proof}
The first chain of inequalities follows from the same argument as Theorem \ref{1andi}.

The second result is similar to Corollary \ref{cor:adding k to both sides}. At any time, card $1$ has been seen no fewer times than card $p_{i-1}$.  Thus, after time $T_1(i - 1)$, every time card $1$ is seen it jumps after card $p_{i-1}$.  It follows that card $p_{i-1}$ is seen at least once between consecutive viewings of card $1$.  Thus, for any $k\ge0$, after time $T_1(i - 1)$, card $p_{i-1}$ will be seen for the $(k+1)$st time before card $1$ is seen $k+1$ additional times.
\end{proof}

\begin{theorem}\label{thm:T_1(k) in general}
If $(p_k)$ is increasing then $T_1(n+1)\le 1 + n \cdot p_n$.
\end{theorem}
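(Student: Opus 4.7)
The plan is to adapt the argument of Theorem~\ref{T_1(n) is quadratic} to the more general setting in which $(p_k)$ is weakly increasing. First I would examine the state of the deck at time $T_1(n)+1$, just after card~$1$'s $n$th viewing has deposited it at position $p_n$. Let $A$ denote the set of cards occupying positions $1, 2, \ldots, p_n - 1$ at this moment, so $|A| = p_n - 1$. A brief argument using the monotonicity of $(p_k)$ together with parts~1 and~5 of Proposition~\ref{basic} shows that every card $i \neq 1$ with $c_i(T_1(n)) \geq 1$ lies in $A$: since $c_i(T_1(n)) \leq c_1(T_1(n)) = n$, the most recent insertion of card $i$ placed it at position $p_{c_i(T_1(n))} \leq p_n$, and by Proposition~\ref{basic} part~1 the position of a card not at the front can only decrease over time.

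Second, I would argue that only cards in $A \cup \{1\}$ can be viewed during the interval $(T_1(n), T_1(n+1)]$: any card strictly behind card~$1$ at time $T_1(n)+1$ stays strictly behind card~$1$ throughout this interval, since a direct case-check using Proposition~\ref{basic} part~1 shows that the gap between card~$1$'s position and the position of any other specified card cannot shrink on a time-step at which card~$1$ is not the front.

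Third, for each $i \in A$ let $v_i$ denote the number of viewings of card~$i$ during $(T_1(n), T_1(n+1)]$. The key observation is that card~$1$'s position $P(s)$ is non-increasing on this interval, starting from $P(T_1(n)+1) = p_n$, so $P(s) \leq p_n$ throughout. If $v_i \geq 1$, each of the first $v_i - 1$ viewings of card~$i$ must re-insert it in front of card~$1$, since otherwise card~$i$ would be stuck behind card~$1$ and could not return to the front before the next viewing of card~$1$. An insertion in front of card~$1$ uses a position strictly less than $P(s) \leq p_n$, so by monotonicity of $(p_k)$ the corresponding insertion index is at most $n - 1$. Thus the count of card~$i$ after its $(v_i - 1)$st viewing in the interval is at most $n - 1$, giving $v_i \leq n - c_i(T_1(n))$.

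Summing yields $T_1(n+1) - T_1(n) = 1 + \sum_{i \in A} v_i \leq 1 + n(p_n - 1) - \sum_{i \in A} c_i(T_1(n))$; the first structural observation identifies $\sum_{i \in A} c_i(T_1(n))$ with $T_1(n) - n$, and the inequality then simplifies to $T_1(n+1) \leq 1 + np_n$. The main obstacle is the position bookkeeping in the second and third steps: tracking exactly what happens to the positions of card~$1$ and of each other card across a time-step (which involves both a removal and an insertion) is elementary but easy to mishandle, and one must be careful that ``in front of card~$1$'' continues to mean the correct comparison as $P(s)$ fluctuates.
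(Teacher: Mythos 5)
Your proof is correct and follows essentially the same approach as the paper's, which likewise adapts the argument of Theorem~\ref{T_1(n) is quadratic}: examine the deck at time $T_1(n)+1$, identify the cards in front of card~$1$ as exactly $\{2,\ldots,p_n\}$, bound the number of further viewings of each such card~$i$ before it jumps behind card~$1$ by $n - c_i$, and sum using $\sum c_i = T_1(n) - n$. You spell out the position bookkeeping (monotonicity of card~$1$'s position, the strict inequality governing insertion in front of card~$1$) in more detail than the paper's terse ``similar argument'' treatment, but the decomposition and the resulting count are the same.
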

\begin{proof}
We use a similar argument as for Theorem~\ref{T_1(n) is quadratic}. Let $t=T_1(n)+1$.  After time $t$, each card will jump over card $1$ after being seen at most $n$ times in total. This means that for $1<i\le p_n$, card $i$ needs to be seen at most $n - c_i(t)$ more times. Using the relation
\[
\sum^{p_n}_{i=2}c_i(t) = T_1(n) - n + 1,
\]
we have
\[
T_1(n+1) - T_1(n) - 1 \le (p_n-1)n+1-(T_1(n)-n+1) = p_n\cdot n-T_1(n),
\]
and the result follows.
\end{proof}

\begin{remark}
The result is not true without the assumption that $(p_k)$ is increasing, since in the difference $T_1(k+1)-T_1(k)$, we only need to subtract the values $c_i(t)$ for cards that are before card $1$ in the deck.  However, if $(p_k)$ is not always increasing, then there may be some cards that have already been seen but that lie after card $1$; we should not add the $k - c_i(t)$ terms associated with these cards.
\end{remark}

For any (not necessarily increasing) sequence $(p_k)$, we can also prove analogues of Theorem~\ref{T_n(1) is quadratic} and \cite[Theorem 7]{NKS} (which shows $T_i(n+1) - T_i(n) \leq n^2$ for the slow flashcard game $p_k = k + 1$).  The next result gives an upper bound on $T_n(1)$; this result is stronger than the bound implied by Theorems~\ref{thm:interweaving in general} and~\ref{thm:T_1(k) in general} in the case that $(p_k)$ is increasing.
\begin{theorem}\label{thm:T_n(1) in general}
We have
\[
T_n(1) \leq 1 + (n - 1) \cdot \min\{j \colon p_j \geq n\}.
\]
\end{theorem}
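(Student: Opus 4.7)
The plan is to show that only cards $1, 2, \ldots, n-1$ are viewed before time $T_n(1)$, and that each of them is viewed at most $j^* := \min\{j : p_j \geq n\}$ times during this interval; summing then gives $T_n(1) - 1 \leq (n-1) j^*$. That cards $n+1, n+2, \ldots$ are never viewed before $T_n(1)$ follows from Proposition~\ref{basic}, part 4, together with the observation that card $n$ always stays in front of cards $n+1, n+2, \ldots$ in the deck: an insertion at position $< n$ moves none of these cards, an insertion at position exactly $n$ advances only card $n$, and an insertion at position $\geq n+1$ advances card $n$ and every card behind it uniformly, so their relative order is preserved.

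The heart of the argument is bounding the number of times card $1$ is viewed before $T_n(1)$. Because $p_k < n$ for every $k < j^*$ by the definition of $j^*$, no insertion can displace card $n$ from its starting position until some card has been viewed $j^*$ times, and by Proposition~\ref{basic}, part 5 (which gives $c_1(t) \geq c_i(t)$ for all $i$), the first card to be viewed $j^*$ times must be card $1$. Thus at time $T_1(j^*)$, card $n$ is still at position $n$; after card $1$ is inserted to position $p_{j^*} \geq n$, card $n$ advances to position $n - 1$ while card $1$ ends up at position $p_{j^*}$, so card $n$ now precedes card $1$ in the deck. By Proposition~\ref{basic}, part 4, card $n$ must be seen before card $1$ is seen again, so $T_n(1) < T_1(j^* + 1)$ and hence $c_1(T_n(1) - 1) \leq j^*$. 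A second application of Proposition~\ref{basic}, part 5 then gives $c_i(T_n(1) - 1) \leq j^*$ for every $i < n$.

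Summing these bounds over $i = 1, \ldots, n-1$ yields $T_n(1) - 1 = \sum_{i=1}^{n-1} c_i(T_n(1) - 1) \leq (n-1) j^*$, which is the desired inequality. The only delicate point is the claim that card $n$ has not moved from position $n$ at the instant of card $1$'s $j^*$-th viewing; once that is secured, the rest of the argument consists of direct applications of Proposition~\ref{basic}.
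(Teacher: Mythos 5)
Your proof is correct. It is worth noting, though, that it takes a somewhat more circuitous route than the paper's. The paper argues \emph{directly} and uniformly that each of the $n-1$ cards in front of card $n$ can be viewed at most $j^* = \min\{j : p_j \geq n\}$ times before $T_n(1)$: after such a card's $j^*$-th viewing it is inserted at position $p_{j^*} \geq n$, and since card $n$ always sits at some position $\leq n \leq p_{j^*}$, the card lands behind card $n$ and (by the no-passing property, Proposition~\ref{basic} part 4) remains there until card $n$ is seen. Summing $n-1$ bounds of $j^*$ plus the step at which card $n$ is finally seen gives the claim. You instead single out card $1$, first showing that card $n$ stays pinned at position $n$ until time $T_1(j^*)$, deducing $T_n(1) < T_1(j^*+1)$, and then propagating the bound $c_i \leq j^*$ to all $i < n$ via Proposition~\ref{basic} part 5. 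This works, but the claim you flag as ``the only delicate point'' --- that card $n$ has not yet moved at time $T_1(j^*)$ --- is actually unnecessary for the direct argument, since all one needs is that card $n$'s position is $\leq n$, which holds trivially. Moreover your route leans on Proposition~\ref{basic} part 5 in the setting of a general (not necessarily increasing) insertion sequence $(p_k)$; that monotonicity statement is indeed still true there (it follows from the no-passing property by an induction showing $T_i(k) < T_j(k)$ for $i < j$), but the paper states it only for the Slow Flashcard Game, so a fully self-contained version of your proof would need to re-verify it, whereas the paper's direct argument sidesteps this dependence.
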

\begin{proof}
Similar to Theorem~\ref{T_n(1) is quadratic}. All of the $n - 1$ cards in front of card $n$ must jump after card $n$. This happens after each card is seen at most $\min\{j \colon p_j \geq n\}$ times.
\end{proof}


We can also prove a general upper bound on the differences $T_i(n + 1) - T_i(n)$.

\begin{theorem}\label{thm:general difference bound}
For all $i$ and $n$ and all sequences $(p_j)$, we have
$T_i(n+1)-T_i(n)\le (p_n-1)n+1$.
\end{theorem}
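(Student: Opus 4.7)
The plan is to mimic the counting argument used in Theorems~\ref{T_1(n) is quadratic} and~\ref{thm:T_1(k) in general}, applied now to a generic card $i$ rather than just card $1$. Set $t_0 = T_i(n) + 1$. At this moment card $i$ has just been reinserted at position $p_n$, so exactly $p_n - 1$ cards sit in front of it; let $F$ denote this set. Because card $i$ is not viewed again until time $T_i(n+1)$, it is not itself reinserted during the interval, and consequently any card behind $i$ at time $t_0$ stays behind $i$. Therefore every time step in $(T_i(n), T_i(n+1))$ views some card in $F$, and the very last step (at $T_i(n+1)$) views card $i$. Writing $V_j$ for the number of views of card $j$ in $(T_i(n), T_i(n+1)]$, this yields
\[
T_i(n+1) - T_i(n) \;=\; 1 \;+\; \sum_{j \in F} V_j,
\]
so the theorem reduces to showing $V_j \le n$ for each $j \in F$.

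The main tool is the extension to arbitrary $(p_k)$ of Proposition~\ref{basic}(5), namely $c_i(t) \ge c_j(t)$ whenever $i < j$. I would prove this by a simultaneous induction on $t$ together with the auxiliary invariant: \emph{whenever $i' < j'$ and $c_{i'}(t) = c_{j'}(t)$, card $i'$ sits in front of card $j'$ in the deck at time $t$}. The invariant is what drives Part 5: if card $m$ is about to be viewed, the invariant rules out any $i' < m$ with $c_{i'} = c_m$, which forces the strict inequality $c_{i'} > c_m$ needed to maintain Part 5 after incrementing $c_m$. Conversely the invariant is preserved after each move, because any card $i' < m$ with new view-count equal to $c_m^{\mathrm{new}}$ was reinserted at position $p_{c_{i'}}$ at its last viewing and so its current position is at most $p_{c_{i'}}$ --- exactly the position $m$ is being moved to --- so after the move $i'$ ends up in front of $m$. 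Specializing to our interval, where $c_i(t) \equiv n$, Part 5 gives $c_j(t) \le n$ for all $j > i$ and $c_j(t) \ge n$ for all $j < i$.

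With this in hand the bound on $V_j$ splits into two cases. If $j > i$ lies in $F$: each view of $j$ increments $c_j$, but $c_j$ cannot exceed $n$ (which would violate Part 5), so $V_j \le n - c_j(T_i(n)) \le n$; in fact on the view where $c_j$ first hits $n$ the insertion position is $p_n$, which is at least the current position of $i$, so $j$ is then moved behind $i$ automatically. If $j < i$ lies in $F$: we already have $c_j \ge n$, so the very next view of $j$ reinserts it at $p_{c_j+1} \ge p_n \ge$ position of $i$, making that first view productive and giving $V_j = 1$. Summing across the $p_n - 1$ members of $F$ produces $\sum_{j \in F} V_j \le (p_n - 1)n$, and the bound $(p_n - 1)n + 1$ follows.

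The main obstacle is the second case above: the step ``$p_{c_j+1} \ge p_n$'' tacitly uses that $(p_k)$ is weakly increasing from index $n$ onwards --- the same dependence flagged in the remark after Theorem~\ref{thm:T_1(k) in general}. Without monotonicity, a small-numbered card $j < i$ lying in front of $i$ can be reinserted in front of $i$ many times before finally jumping behind, and $V_j$ can exceed $n$; recovering the stated bound in the fully general setting would require bundling the wasteful views from all such cards into an aggregate estimate rather than bounding each $V_j$ card-by-card, and that aggregation step is where I expect the real work of a fully general proof to lie.
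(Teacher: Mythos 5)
Your decomposition $T_i(n+1) - T_i(n) = 1 + \sum_{j\in F} V_j$ over the $p_n - 1$ cards in $F$, and the split of the bound on $V_j$ according to whether $j>i$ or $j<i$, is precisely the skeleton of the paper's (very terse) proof: the paper simply asserts ``every one of the $p_n-1$ cards in front of card $i$ jumps over it after being seen at most $n$ times.'' Your handling of the case $j>i$ via the order-preservation property (Proposition~\ref{basic}(5)) is correct and is what makes that case work for any sequence $(p_k)$.

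The gap you flag in the case $j<i$ is real, and it is in fact a gap in the paper, not just in your write-up. When $(p_k)$ is not weakly increasing, a card $j<i$ in $F$ can indeed be re-inserted in front of $i$ many times, so $V_j$ is \emph{not} bounded by $n$, and the stated inequality fails outright. A concrete counterexample: take $p_1 = 4$, $p_2 = p_3 = 2$, $p_4 = 3$ (and anything unbounded afterwards), and look at $i=3$, $n=1$. One computes $T_3(1)=3$ and $T_3(2)=11$, so $T_3(2)-T_3(1)=8$, whereas the claimed bound is $(p_1-1)\cdot 1 + 1 = 4$. Here the cards $1$ and $2$ are each viewed three times in the interval (at the views numbered $2,3,4$, each with small insertion position $p_2=p_3=2$), exactly the repeated ``wasteful views by a small-numbered card'' you anticipated. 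So the theorem as stated, ``for all sequences $(p_j)$,'' is simply false; it needs the hypothesis that $(p_k)$ is weakly increasing, under which your argument closes cleanly with $V_j\le 1$ when $j<i$. Your remark that rescuing the general case would require an aggregate rather than card-by-card estimate is a reasonable guess, but the example above shows no such rescue is possible with the stated bound.

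One smaller point: the auxiliary invariant you propose for proving the extension of Proposition~\ref{basic}(5) --- that $i'<j'$ with $c_{i'}(t)=c_{j'}(t)$ forces $i'$ in front of $j'$ --- is not literally true. Already in the Slow Flashcard Game at time $t=2$ we have $c_1=c_2=1$ with card $2$ at the front and card $1$ behind it. The culprit is the timing: right after the front card is reinserted, the \emph{new} front card has its counter incremented, and at that instant it can tie a larger-numbered card that now sits behind it. The conclusion $c_i(t)\ge c_j(t)$ for $i<j$ does hold, but the invariant needs to be phrased more carefully (for instance, in terms of comparing the times $T_{i'}(k)$ and $T_{j'}(k)$ rather than positions at a fixed $t$). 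Since you only invoke the conclusion, not the invariant, in the $j>i$ case, this is a repairable technicality rather than a flaw in your main argument.
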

\begin{proof}
Similar to \cite[Theorem 7]{NKS}. We want every one of the $p_n - 1$ cards in front of the card $i$ in position $p_n$ to jump over it. This happens after each card is seen at most $n$ times.
\end{proof}

\begin{problem}
Theorem~\ref{thm:general difference bound} is sharp when the sequence $(p_k)$ satisfies $p_k \mid p_{k + 1}$ for all $k$ \cite[the ``generalized recap schedule'']{NKS} but (assuming Conjecture~\ref{conj:NKS} holds) has room for improvement when $p_k = k + 1$.  Can we say anything when $p_k$ grows like a polynomial in $k$?  In particular, is the bound of Theorem~\ref{thm:general difference bound} always too lax in this case?
\end{problem}

\section{Open questions, generalizations and variations}\label{sec:open questions}

In this section, we consider several other variations and extensions on the notion of a flashcard game.  We do not seek to prove any major results, but rather to suggest possible directions for future research in addition to those conjectures and questions scattered throughout the preceding sections.

\subsection{Flashcard games}

Several intriguing problems related to the Slow Flashcard Game (e.g., Conjectures~\ref{stable} and~\ref{conj:NKS}) remain open.  These conjectures amount to particular aspects of the following general project.
\begin{problem}
Characterize the functions $T_n(k)$ (that give the time at which card $n$ is seen for the $k$th time) or equivalently $c_n(t)$ (that gives the number of times card $i$ has been seen at time $t$).
\end{problem}
Similarly, one can ask to understand these functions in the context of a general flashcard game.  For example, it seems natural to ask how good the results in Section~\ref{sec:generalizing} are.
\begin{problem}
When are the bounds in Section~\ref{sec:generalizing} tight?
\end{problem}

\subsection{Multiplication by permutations}
We can recast flashcard games as certain processes on the group $S_\infty$ of permutations of $\Z_{> 0}$ that fix all but finitely many values.  The operation ``move the card at front of the deck to the $p_k$th position'' is equivalent to multiplying the deck (thought of as a member of $S_\infty$ in one-line notation) by the $p_k$-cycle $C_{p_k}= (1,2,\dots,p_k)$.  This immediately suggests the following generalization: given a sequence $(\sigma_k)_{k \in \Z_{> 0}}$ of members of $S_\infty$ and starting with the deck in the usual order $1, 2, 3, \ldots$, upon viewing a card for the $k$th time, multiply by the deck by the permutation $\sigma_k$.  We mention four reasonable-seeming choices for the $\sigma_k$; two are easy to understand and not very interesting, while two behave in a more complicated fashion.
\begin{enumerate*}
\item If $\sigma_k = (1, k + 1)$ is the transposition that switches the cards in the first and $(k + 1)$th position, the associated viewing sequence is very simple:
\[
1,2,1,3,2,1,4,3,2,1,5,4,3,2,1,\dots.
\]
In the same way that Novikoff \emph{et al.} view certain sequences as reading ordered for labeled trees, this order can be realized as follows: in the tree
$$ \xy 0;/r.17pc/: (0,0)*{\boxed{1}}="a";
 (10,0)*{\boxed{2}}="b";
 (20,0)*{\boxed{3}}="c";
 (30,0)*{\boxed{4}}="d";
 (5,10)*{\boxed{1}}="e";
 (12,20)*{\boxed{21}}="f";
 (20,30)*{\boxed{321}}="g";
"a"; "e"**\dir{-};
"b"; "e"**\dir{-};
"f"; "e"**\dir{-};
"c"; "f"**\dir{-};
"g"; "f"**\dir{-};
"d"; "g"**\dir{-};
(30,20)*{\cdots}
\endxy$$
we start from the left-most leaf and go from the left leaf to the right leaf and then to their parent. This way we get the sequence above.

Note that in this example we have
\[
T_n(k)-T_n(k-1)=n+k-1,
\]
while an ideal learning process should have the property that $T_n(k)-T_n(k-1)<f(k)$, independent of $n$.
\item If $\sigma_k = (1, k + 1)(2, k)(3, k - 1) \cdots$ is the permutation that reverses the order of the first $k + 1$ cards in the deck, the resulting reading order is
\[
1, 2, 1, 3, 2, 1, 4, 2, 1, 5, 2, 1, 6, 2, 1, \ldots
\]
and only the cards $1$ and $2$ are viewed infinitely many times.
\item If $\sigma_k = (1, k + 1)(2, k + 2)(3, k + 3) \cdots (k, 2k)$ is a ``cut'' of the deck that switches the first $k$ cards with the next $k$, the viewing sequence is
\[
1, 2, 1, 3, 4, 3, 1, 4, 6, 1, 2, 7, 8, 7, 2, 8, 6, 8, 2, 1, 3, 9, 10, 9, 1, 6, 2, 5, 1, 12, 6, \ldots.
\]
\item If $\sigma_k$ is given in one-line notation as $\sigma_k = (k + 1)1(k + 2)2(k + 3)3\cdots (k - 1)(2k)k (2k + 1)(2k + 2) \cdots$ (i.e., it is the permutation that applies a shuffle to the first $2k$ cards), the viewing sequence is
\[
1, 2, 1, 3, 1, 2, 5, 2, 1, 4, 1, 6, 1, 9, 1, 4, 11, 4, 1, 3, 10, 3, 1, 2, 9, 13, 9, 2, 1, 16, 1, \ldots.
\]
\end{enumerate*}
The last two examples exhibit very mysterious behavior; it would be interesting to give any quantitative description of them.

\subsection{Permutation statistics}
We continue to view the deck of cards at time $t$ as an infinite permutation, but return to the case of flashcard games.  Since the deck has (as a permutation of $\Z_{> 0}$) only finitely many non-fixed points, many classical permutation statistics on $S_n$ make sense for this permutation.  For example, the permutation statistics $\inv$ and $\des$ (number of inversions and descents, respectively) are both well-defined.  The following question about the evolution of these statistics is natural.
\begin{problem}
What is the growth rate of the number of inversions of the deck as a function of time?  Descents?  Other interesting permutation statistics?
\end{problem}

\subsection{Randomness}
Finally, we observe that though our entire discussion has been about deterministic procedures, it is perhaps most natural to consider randomized versions of this process, e.g., to treat $p_k$ not as a fixed value but instead as a random variable with distribution depending on $k$.  (Observe that if $p_k$ does \emph{not} depend on $k$, i.e., if we insert the last-viewed card into a random position in the deck without regard for how many times the card has been viewed, then we are performing a random walk on the Cayley graph of $S_\infty$ generated by the cycles $C_{p_1}$, $C_{p_2}$, etc.)  Two natural choices for $(p_k)$ are the uniform distribution on $[1, 2k + 1]$ or a Poisson distribution with mean $k$.  What can be said about the behavior of the functions $T_n(k)$ in this case?

\section*{Acknowledgements}
\label{sec:ack}
The authors are grateful to Tim Novikoff and Ekaterina Orekhova for interesting discussions; to Mark Lipson for providing Theorem~\ref{T_n(1) is quadratic} and several very helpful comments; and to an anonymous referee for a variety of helpful suggestions.

\bibliographystyle{alpha}
\bibliography{FlashcardBibliography}

\begin{thebibliography}{NKS12}

\bibitem[NKS12]{NKS}
Timothy~P. Novikoff, Jon~M. Kleinberg, and Steven~H. Strogatz.
\newblock Education of a model student.
\newblock {\em PNAS}, 109:1868--1873, 2012.

\bibitem[Sta99]{EC2}
Richard~P. Stanley.
\newblock {\em Enumerative combinatorics}, volume~2.
\newblock Cambridge University Press, 1999.

\end{thebibliography}
\label{sec:biblio}
\end{document}